\documentclass[a4paper,11pt]{article}
\usepackage{amsmath,amssymb,amsthm,multirow}
\usepackage{graphicx,epsfig,color}
\usepackage{fancyhdr,subcaption,graphicx}
\usepackage{amsfonts}
\usepackage{mathtools} 
\usepackage{latexsym}
\usepackage{algorithm,algorithmic}
\usepackage{pstricks}
\usepackage{bbm}
\usepackage{todonotes,booktabs}
\usepackage{acronym,comment}

\usepackage[a4paper, margin=2cm]{geometry}

\definecolor{lightgreen}{rgb}{0.22,0.50,0.25}
\definecolor{lightblue}{rgb}{0.22,0.45,0.70}
\usepackage[colorlinks=true,breaklinks=true,linkcolor=lightgreen,citecolor=lightblue]{hyperref}

\setlength{\parindent}{15pt}
\setlength{\parskip}{2pt}

\newtheorem{theorem}{Theorem}[section]
\newtheorem{lemma}[theorem]{Lemma}

\numberwithin{equation}{section}
\numberwithin{figure}{section}
\numberwithin{table}{section}

\newcommand\bbR{\mathbb{R}}

\newcommand\bbQ{\mathbb{Q}}

\newcommand\bH{{\mathbf H}}
\newcommand\bV{{\mathbf V}}

\newcommand{\bn}{\boldsymbol{n}}
\newcommand{\bu}{\boldsymbol{u}}
\newcommand{\bv}{\boldsymbol{v}}
\newcommand{\bw}{\boldsymbol{w}}
\newcommand{\bx}{\boldsymbol{x}}
\newcommand{\bz}{\boldsymbol{\zeta}}
\newcommand{\ten}{\mathsf}

\newcommand{\bnabla}{\boldsymbol{\nabla}}
\newcommand{\bsigma}{\boldsymbol{\sigma}}
\newcommand{\btau}{\boldsymbol{\tau}}

\newcommand\dx{{\,\mathrm{d}\bx\,}}
\newcommand\ds{{\,\mathrm{d}s\,}}
\newcommand\dt{{\Delta t}}

\def\tr{\mathrm{tr}}

\acrodef{fe}[FE]{Finite Element}
\acrodef{fem}[FEM]{Finite Element Method}
\acrodef{pde}[PDE]{Partial Differential Equation}
\acrodefplural{pde}[PDEs]{Partial Differential Equations}
\acrodef{amr}[AMR]{Adaptive Mesh Refinement and coarsening}
\acrodef{aa}[AA]{Anderson Acceleration}
\acrodef{dir}[DIR]{Deformable Image Registration problem}
\acrodef{dof}[DoF]{Degree of Freedom}
\acrodefplural{dof}[DoFs]{Degrees of Freedom}

\title{Tree-based adaptive finite element methods for deformable image registration\thanks{\textbf{Funding:} This work has been partially supported by the Monash Mathematics Research Fund S05802-3951284; by the Australian Research Council through the \textsc{Future Fellowship} grant FT220100496 and \textsc{Discovery Project} grant DP22010316; and by the National Research and Development Agency (ANID) of the Ministry of Science, Technology, Knowledge and Innovation of Chile through the grant FONDECYT de Postdoctorado No. 3230326 and the Center for Mathematical Modeling (CMM), Proyecto Basal FB210005. This work was also supported by computational resources provided by the Australian Government through NCI under the ANU Merit Allocation Scheme (ANUMAS).}}
\author{Nicol\'as A. Barnafi\thanks{Instituto de Ingeniería Matemática y Computacional \& Facultad de Ciencias Biológicas, Pontificia Universidad Católica de Chile, Av Vicuña Mackenna 4860, Santiago, Chile, and Center for Mathematical Modeling, Santiago, Chile.  Email: {\tt nicolas.barnafi@uc.cl}.} \and Alberto F. Mart\'in\thanks{School of Computing, Australian National University, Acton ACT 2601, Australia. Email: { \tt alberto.f.martin@anu.edu.au}.} \and Ricardo Ruiz-Baier\thanks{School of Mathematics, Monash University, 9 Rainforest Walk, Melbourne 3800 VIC, Australia; and Universidad Adventista de Chile, Casilla 7-D Chill\'an, Chile. Email: {\tt ricardo.ruizbaier@monash.edu}.}}
\begin{document}
\maketitle

\begin{abstract}
    In this work we propose an adaptive \ac{fem} formulation for the \ac{dir} together with a residual-based \emph{a posteriori} error estimator, whose efficiency and reliability are theoretically established. This estimator is used to guide \ac{amr}. The nonlinear Euler--Lagrange equations associated with the minimisation of the relevant functional are solved with a pseudo time-stepping fixed-point scheme which is  further accelerated using \ac{aa}. The efficient implementation of these solvers relies on an efficient adaptive mesh data structure based on forests-of-octrees endowed with space-filling-curves. Several numerical results illustrate the performance of the proposed methods applied to adaptive \ac{dir} in application-oriented problems.
\end{abstract}

\noindent
{\bf Keywords}: Deformable image registration, a-posteriori error estimates, Anderson acceleration, adaptive mesh refinement and coarsening, forest-of-octrees.

\smallskip\noindent
{\bf Mathematics subject classification (2020)}: 65N50, 65N15, 74B20, 74N25.

\maketitle

\section{Introduction}
Image registration problems consist in finding an appropriate mapping of a given template image so that it resembles another reference image. These images can originate from several different applications, and may represent physical parameters, locations, or even time. See \cite{sotiras2013deformable} for a review in medical imaging. There are many ways of defining such a problem, including variational formulations \cite{christensen1996deformable,henn2004multimodal}, level-set methods, grid deformation methods \cite{lee2010optimal}, learning processes \cite{de2019deep} and Bayesian techniques \cite{deshpande2019bayesian}, among others. All these approaches differ in how they measure image alignment, with such measure being referred to as \emph{similarity}. Image registration is named \emph{deformable} whenever the mapping between images is allowed to vary arbitrarily between pixels, which results in a highly nonlinear problem.

Image registration remains a challenging problem in computational imaging due to three inherent difficulties: its ill-posed nature, the selection of appropriate similarity metrics, and the high computational demands of numerical optimisation. First, the problem's ill-posedness necessitates regularisation strategies, such as elastic potential energy constraints \cite{sotiras2013deformable}, to ensure physically plausible solutions. Second, defining effective similarity measures--whether through energy-based criteria or intensity difference metrics like the $L^2$ norm--requires careful consideration of image modality and noise characteristics. Third, solving the Euler--Lagrange equations derived from the energy minimisation framework imposes significant computational costs \cite{haber2007octree}, particularly when capturing localised deformations. These deformations often exhibit high spatial gradients to resolve fine anatomical or structural details, further amplifying discretisation challenges.

Adaptive numerical methods naturally present a promising approach to address these issues. By dynamically refining computational grids or basis functions in regions of sharp deformation, adaptive techniques hold the promise to achieve a remarkable trade-off among accuracy and computational efficiency. Recent advances in \ac{amr} demonstrate the potential of such strategies to overcome traditional limitations in registration tasks, for which we highlight \cite{barnafi2021adaptive,haber2007octree,haber2008adaptive,pawar2016adaptive,zhang2013adaptive}.
With this in mind, in this work we leverage hierarchically-adapted non-conforming forest-of-octrees meshes endowed with Morton (a.k.a., Z-shaped) space-filling-curves for storage and data partitioning; see, e.g., \cite{badia2020generic,burstedde2011p4est}. These $n$-cube meshes (made of quadrilaterals or cubes in 2D and 3D, resp.) can be very efficiently handled (i.e. refined, coarsened, re-partitioned, etc.) using high-performance and low-memory footprint algorithms \cite{burstedde2011p4est}. However, they are non-conforming at the interfaces of cells with different refinement levels as they yield hanging vertices, edges and faces. In order to enforce the conformity (continuity) of \ac{fe} spaces built out of this kind of meshes, it is standard practice to equip the space with additional linear multi-point constraints. The structure and set up of such constraints is well-established knowledge, and thus not covered here; see \cite{badia2020generic} and references therein for further details.

In this work, we consider the \ac{dir} with an $L^2$ similarity measure and a regularisation given by the (linear) elastic energy of the deformation with Robin and pure Neumann boundary conditions. (We note that nonlinear elasticity has also been considered as an alternative regularization term in other works, such as, e.g. \cite{genet2018equilibrated}.) This formulation was analysed in \cite{barnafi2018primal}, and an efficient and reliable \emph{a-posterior} error estimator was developed for it in \cite{barnafi2021adaptive} for the pure-Neumann case, where uniqueness was obtained by imposing orthogonality against the kernel of the regularisation operator. We extend that analysis to the case of Robin boundary conditions. This efficient and reliable \emph{a-posterior} error estimator is used at a given solution of the \ac{dir} on a given mesh in order to automatically adapt it, and the process is repeated across several \ac{amr} iterations to successively improve the accuracy of the \ac{dir} solution.

As mentioned above, solving \ac{dir} is a challenging task. It is essentially a nonlinear inverse problem. A well-established strategy to solve it is by means of a pseudo-time formulation \cite{modersitzki2003numerical}, which is an IMEX approximation of a proximal point algorithm \cite{kaplan1998proximal}. In this work, we leverage \ac{aa} \cite{walker2011anderson} to improve its robustness and efficiency in terms of the iteration count. 
Anderson acceleration is a method that considers a fixed-point iteration and yields another one with improved convergence, both in terms of the radius of convergence and the convergence rate \cite{toth2015convergence}. This technique has already been successfully validated for proximal-point algorithms in the context of inverse problems \cite{mai2020anderson}, which is the method that most resembles the scheme we use. 

The main contributions of this work are: (i) the extension of the efficiency and reliability proof of the existing \emph{a posteriori} error indicator for \ac{dir} to Robin boundary conditions; (ii) the use of \ac{aa} to accelerate the pseudo-time solution strategy commonly used in \ac{dir}; (iii) the use of the computed \emph{a posteriori} error estimator to guide efficient \ac{amr} using octree-based meshes; and (iv) the combination of \ac{aa} and \ac{amr} into a novel \ac{aa}-\ac{amr} algorithm for solving \ac{dir}. These methodologies result in significant computational savings and increased accuracy, as shown in our numerical tests. Our software is implemented in the Julia programming language \cite{bezanson2017julia} using the \ac{fe} tools provided by the \texttt{Gridap} ecosystem of Julia packages \cite{verdugo22,badia22,GridapP4est2025}.

\paragraph{Structure.} The remainder of the work has been organised as follows. In Section~\ref{section:problem setting}, we formulate the \ac{dir} problem. In Section~\ref{section:fem}, we propose  \ac{fem} discretisation for \ac{dir} and its related robust (in the sense of reliability and efficiency) \emph{a-posteriori} error estimator driving the \ac{amr} process. In Section~\ref{section:solution strategy}, we show how we orchestrate the solution of the nonlinear problem with both \ac{aa} and \ac{amr}. In Section~\ref{section:numerical tests} we provide several numerical tests to validate our approach. We conclude our work with some comments and an outlook for future work in Section~\ref{section:conclusions}. 
\section{Problem setting}\label{section:problem setting}
Consider a domain $\Omega\subset \bbR^{d=2,3}$, and two fields $R:\Omega \to \bbR$ and $T:\Omega\to \bbR$ referred to as \emph{reference} and \emph{target} (or template) images, where $R(\bx)$ and $T(\bx)$ denote the \emph{image intensity} at point $\bx$. \ac{dir} consists in finding a transformation under which the template image resembles the reference \cite{modersitzki2003numerical}, that is to find a mapping of $T$ onto $R$ by means of a warping $\bu$ 
such that 
	\begin{equation}\label{intro:equality}
		T(\bx + \bu(\bx)) \approx R(\bx) \quad \forall \, \bx \in \Omega\,.
	\end{equation}
	This can be reformulated as a variational problem 
	\begin{equation}\label{eqn:pvar}
\inf_{\bu\in \mathcal{V}}\alpha \mathcal{D}[\bu;R,T]+\mathcal{S}[\bu],
\end{equation}
where $\mathcal{V}$ models the admissible deformations space, $\mathcal{D}: \mathcal{V}\rightarrow\mathbb{R}$ is the aforementioned \emph{similarity measure}  which attains its minimum when \eqref{intro:equality} holds, $\mathcal S$ is a regulariser, and $\alpha$ is a regularisation parameter that balances the effects of $\mathcal D$ and $\mathcal S$. In this work we consider $\mathcal V=\bH^1(\Omega)$,  the $L^2$-norm of the error for the similarity, given by 
    \begin{equation*}
        \mathcal D[\bu; R, T] = \int_\Omega (T(\bx + \bu(\bx)) - R(\bx))^2\dx,
    \end{equation*}
and as a regulariser we use the elastic deformation energy, defined in the following manner 
    \begin{equation*}
        \mathcal{S}[\bu]:=\frac{1}{2}\int_{\Omega}{\bf \mathcal{C}}\mathbf{e}(\bu):\mathbf{e}(\bu)\dx.
    \end{equation*}
Here 
$\mathbf{e}(\bu)=\frac{1}{2}\{\bnabla \bu+(\bnabla \bu)^{\mathrm{t}}\}$ 
is the infinitesimal strain tensor, i.e., the symmetric component of the displacement field gradient, and ${\bf \mathcal{C}}$ is the elasticity tensor for isotropic solids:
\[
    {\bf \mathcal{C}}\btau=\lambda \tr(\btau)\mathbb{I}+2\mu\btau\quad \forall\btau\in\bbR^{d\times d}.
\]
Naturally, one may consider many other types of regularisations. See \cite{modersitzki2003numerical} for a review.
Assuming that (\ref{eqn:pvar}) has at least one solution with sufficient regularity, the associated Euler--Lagrange equations deliver the following strong problem with Robin or Neumann boundary conditions (representing springs of stiffness $\kappa\geq 0$): Find $\bu$ in $\bH^1(\Omega)$ such that
\begin{equation}\label{eqn:problem}
\begin{array}{rlll}       
 -\mathbf{div}({\bf \mathcal{C}}\mathbf{e}(\bu))& = &\alpha\boldsymbol{f_u}&\quad  \mathrm{in} \quad\Omega,\\[1ex]
{\bf \mathcal{C}}
\mathbf{e}(\bu)\bn + \kappa \bu & = & \boldsymbol{0}& \quad   \mathrm{on} \quad \partial\Omega,
 \end{array}
 \end{equation}
 where 
 \begin{equation}\label{eqn:problem-f}
\boldsymbol{f_u}(\boldsymbol{x})=\big\{T(\boldsymbol{x}+\bu(\boldsymbol{x}))-R(\boldsymbol{x})\big\}\nabla T(\boldsymbol{x}+\bu(\boldsymbol{x}))\quad \forall\boldsymbol{x} \in\Omega.
 \end{equation}
 For the analysis we assume that there are positive constants $L_f$ and $M_f$ such that the nonlinear load term $\boldsymbol{f_u}$ is Lipschitz continuous and uniformly bounded:
     \begin{equation}\label{eqn:assumps}
         |\boldsymbol{f_u}(\boldsymbol{x})-\boldsymbol{f_v}(\boldsymbol{x})| \leq L_f|\boldsymbol{u}(\boldsymbol{x})-\boldsymbol{v}(\boldsymbol{x})|, \quad 
          |\boldsymbol{f_u}(\boldsymbol{x})|\leq M_f\qquad  \forall\,\boldsymbol{x}\in \Omega\,\, a.e.
     \end{equation}

 Under the previous considerations, \eqref{eqn:pvar} becomes 
	\begin{equation*}
        \min_{\bv\in \bH^1(\Omega)}\,\Big\{ \alpha  \int_\Omega |T(\bx + \bu(\bx)) - R(\bx)|^2\dx +\frac12 \int_\Omega \mathcal{C}\mathbf{e}(\bu):\mathbf{e}(\bu)\dx\Big\} + \kappa \int_{\partial\Omega}|\bu|^2\ds,
	\end{equation*}
with first order conditions given by the primal variational formulation for the registration problem: 
Find $\bu\in \bH^1(\Omega)$ such that
 \begin{equation}\label{eqn:prim}
 a(\bu,\bv)=\alpha F_{\bu}(\bv)\quad \forall\bv\in \bH^1(\Omega),
 \end{equation}
 where $a:\bH^1(\Omega)\times\bH^1(\Omega)\rightarrow\mathbb{R}$ is the bilinear form defined by
  \begin{equation}\label{a-primal}
      a(\bu,\bv) \coloneqq\int_{\Omega}{\bf \mathcal{C}}\mathbf{e}(\bu):\mathbf{e}(\bv)+\kappa\int_{\partial\Omega}\bu\cdot \bv\ds \quad\quad \forall\,\bu,\bv\in \bH^1(\Omega),   
  \end{equation}
  and for every $\bu\in\bH^1(\Omega)$, $F_{\bu}:\bH^1(\Omega)\rightarrow\mathbb{R}$ is the linear functional given by
  \begin{equation}
  F_{\bu}(\bv)   \coloneqq-\int_{\Omega}\boldsymbol{f}_{\bu}\cdot \bv \qquad\qquad\forall\, \bv \in\bH^1(\Omega).
  \end{equation}
The conditions (\ref{eqn:assumps}) imply  the Lipschitz continuity and uniform boundedness of $F_{\bu}$, that is
  \begin{equation}\label{eqn:lipschitz}
  \| F_{\bu}- F_{\bv}\|_{{\bH^1(\Omega)}^{\prime}}\leq L_F\|\bu-\bv\|_{0,\Omega}, \qquad 
   \| F_{\bu}\|_{{\bH^1(\Omega)}^{\prime}}\leq M_F 
  \qquad \forall\,\bu,\bv\in \bH^1(\Omega),
  \end{equation}
  respectively. We recall the results concerning the  solvability  of (\ref{eqn:prim}), as developed in \cite[Section 3]{barnafi2018primal}. For it, whenever $\kappa=0$ we need to modify the solution space in order to guarantee uniqueness. We will denote with $\mathbb{RM} = \ker(\mathcal S)$ as the space of rigid body modes, and thus consider the solution space given by 
    \begin{equation*}
        \bV = \begin{cases} \bH^1(\Omega) & \kappa>0 \\ \bH^1(\Omega) \cap \left[\mathbb{RM}\right]^\perp & \kappa = 0\end{cases}.
    \end{equation*}
With it, we define the following linear auxiliary problem: Given $\bz\in \bH^1(\Omega)$, find $\bu\in \bV$ such that
   \begin{equation}\label{eqn:prim-z}
   a(\bu,\bv)=\alpha F_{\bz}(\bv),\quad \bv\in \bV.
   \end{equation}
   
   \begin{theorem}
Given $\bz\in \bH^1(\Omega)$, problem  \eqref{eqn:prim-z} has a unique solution $\bu\in \bV$, and there exists  $C_p > 0$ such that 
$$\|\bu\|_{1,\Omega}\leq \alpha C_p\| F_{\bz}\|_{{\bH^1(\Omega)}^{\prime}}\,.$$
\end{theorem}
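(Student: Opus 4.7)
The plan is to apply the Lax--Milgram theorem to the bilinear form $a(\cdot,\cdot)$ and the right-hand side $\alpha F_{\bz}$ on the Hilbert space $\bV$. Boundedness of $a$ on $\bH^1(\Omega)\times\bH^1(\Omega)$ follows from the Cauchy--Schwarz inequality combined with the trace inequality for the boundary term, with a continuity constant depending only on $\lambda$, $\mu$, $\kappa$, and $\Omega$. The boundedness of $\alpha F_{\bz}$ on $\bV$ is immediate from the second estimate in \eqref{eqn:lipschitz}, which yields $\|F_{\bz}\|_{\bH^1(\Omega)'}\leq M_F$, so the only real work is coercivity.

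For coercivity, I would split into the two cases that motivated the definition of $\bV$. When $\kappa>0$, I would combine Korn's second inequality $\|\mathbf{e}(\bu)\|_{0,\Omega}^2+\|\bu\|_{0,\Omega}^2 \geq c_K \|\bu\|_{1,\Omega}^2$ with a Poincaré-type estimate that exploits the boundary contribution, namely $\|\bu\|_{0,\Omega}^2 \leq C(\|\mathbf{e}(\bu)\|_{0,\Omega}^2 + \|\bu\|_{0,\partial\Omega}^2)$, which holds because the only rigid body modes with vanishing trace on $\partial\Omega$ are zero. Using the pointwise spectral bound ${\bf \mathcal{C}}\btau:\btau \geq 2\mu\,\btau:\btau$ for symmetric $\btau$ then gives $a(\bu,\bu)\geq \tilde c (\|\mathbf{e}(\bu)\|_{0,\Omega}^2 + \kappa \|\bu\|_{0,\partial\Omega}^2) \geq \alpha_0 \|\bu\|_{1,\Omega}^2$ for some $\alpha_0>0$.

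When $\kappa=0$, the space $\bV = \bH^1(\Omega)\cap [\mathbb{RM}]^\perp$ eliminates exactly the kernel of $\mathbf{e}(\cdot)$, so Korn's first inequality restricted to this orthogonal complement yields $\|\mathbf{e}(\bu)\|_{0,\Omega}\geq c_K' \|\bu\|_{1,\Omega}$ for all $\bu\in\bV$; combined again with the spectral bound on ${\bf \mathcal{C}}$, this delivers coercivity on $\bV$. In both cases Lax--Milgram then yields a unique $\bu\in\bV$ with $\|\bu\|_{1,\Omega}\leq \alpha_0^{-1}\|\alpha F_{\bz}\|_{\bV'}\leq \alpha C_p \|F_{\bz}\|_{\bH^1(\Omega)'}$, setting $C_p=\alpha_0^{-1}$.

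The main obstacle is establishing coercivity uniformly in a way that covers both the Robin ($\kappa>0$) and the pure Neumann ($\kappa=0$) cases; in particular, the Robin case requires the Korn/Poincaré estimate in which the boundary term controls the rigid-motion components, whose proof relies on a standard compactness/contradiction argument to rule out nontrivial rigid motions with vanishing trace. The remaining steps are routine applications of Lax--Milgram and the continuity bound for $F_{\bz}$ already provided by \eqref{eqn:lipschitz}.
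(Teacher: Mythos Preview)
Your proposal is correct and follows the standard route: Lax--Milgram with coercivity supplied by the appropriate Korn inequality in each of the two cases defining $\bV$. The paper itself does not spell out a proof of this theorem; it simply recalls the result from \cite[Section~3]{barnafi2018primal}, and later (in the proof of Lemma~\ref{lem:relprim}) invokes the ellipticity of $a$ with constant $\bar\alpha$ by citing \cite[Corollary~11.2.22]{brenner2008mathematical}, which is precisely the Korn-type inequality you are using. So your argument is exactly the one implicitly relied upon, just made explicit.
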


We now define the operator $\mathbf{\widehat T} : \bV\rightarrow \bV$ given by $\mathbf{\widehat T}(\bz)=\bu$, where $\bu$ is the unique solution to problem (\ref{eqn:prim-z}) and thus rewrite (\ref{eqn:prim}) as the fixed-point equation: Find $\bu$ in $\bV$ such that
\begin{equation}\label{T}
\mathbf{\widehat T}(\bu)=\bu.
\end{equation}

The following result, {also proven in \cite[Theorem 3]{barnafi2018primal}, establishes the existence and uniqueness} of solution to the fixed-point equation (\ref{T}) whenever $\kappa=0$.
\begin{theorem}
Under data assumptions {\rm(\ref{eqn:assumps})}, the operator $\mathbf{\widehat T}$ has at least one fixed point. Moreover, if $\alpha C_p L_F < 1$, the fixed point is unique.
\end{theorem}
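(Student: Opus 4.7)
The plan is to tackle existence and uniqueness separately: existence will follow from Schauder's fixed point theorem (since we cannot hope for contractivity without the smallness condition), while uniqueness will be a short consequence of the Lipschitz assumption together with the \emph{a priori} bound from the previous theorem. Throughout, the key enabling fact is that the nonlinear datum $F_{\bz}$ depends on $\bz$ only through its $L^2$ values, as encoded by the Lipschitz estimate in \eqref{eqn:lipschitz}.

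For existence, I would first verify that $\mathbf{\widehat T}$ maps a suitable closed ball into itself. Let $r := \alpha C_p M_F$ and consider $B_r := \{\bz \in \bV : \|\bz\|_{1,\Omega} \le r\}$, which is nonempty, convex, closed and bounded in the Hilbert space $\bV$. For any $\bz \in B_r$, the previous theorem together with the uniform bound $\|F_{\bz}\|_{\bH^1(\Omega)^\prime} \le M_F$ from \eqref{eqn:lipschitz} yields $\|\mathbf{\widehat T}(\bz)\|_{1,\Omega} \le \alpha C_p M_F = r$, so $\mathbf{\widehat T}(B_r) \subset B_r$. Next, I would establish compactness of $\mathbf{\widehat T}$ on $B_r$: given any sequence $\{\bz_n\} \subset B_r$, pass to a subsequence with $\bz_n \rightharpoonup \bz$ in $\bV$ (weak compactness of bounded sets combined with the weak closedness of $B_r$). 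By the Rellich--Kondrachov theorem, $\bz_n \to \bz$ strongly in $\bL^2(\Omega)$. Applying the \emph{a priori} bound to the linear problem satisfied by the difference $\mathbf{\widehat T}(\bz_n) - \mathbf{\widehat T}(\bz)$ and using \eqref{eqn:lipschitz},
\[
\|\mathbf{\widehat T}(\bz_n) - \mathbf{\widehat T}(\bz)\|_{1,\Omega} \le \alpha C_p \|F_{\bz_n} - F_{\bz}\|_{\bH^1(\Omega)^\prime} \le \alpha C_p L_F \|\bz_n - \bz\|_{0,\Omega} \longrightarrow 0,
\]
so $\mathbf{\widehat T}$ is weak-to-strong continuous on $B_r$, hence compact and continuous. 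Schauder's fixed point theorem then provides a fixed point in $B_r$.

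For uniqueness under $\alpha C_p L_F < 1$, suppose $\bu_1, \bu_2 \in \bV$ are both fixed points. By linearity of $a(\cdot,\cdot)$, the difference $\bu_1 - \bu_2 \in \bV$ satisfies $a(\bu_1-\bu_2,\bv) = \alpha(F_{\bu_1}(\bv) - F_{\bu_2}(\bv))$ for all $\bv \in \bV$. Applying the \emph{a priori} bound of the previous theorem and then \eqref{eqn:lipschitz} gives
\[
\|\bu_1-\bu_2\|_{1,\Omega} \le \alpha C_p \|F_{\bu_1}-F_{\bu_2}\|_{\bH^1(\Omega)^\prime} \le \alpha C_p L_F \|\bu_1-\bu_2\|_{0,\Omega} \le \alpha C_p L_F \|\bu_1-\bu_2\|_{1,\Omega},
\]
which forces $\bu_1 = \bu_2$ whenever $\alpha C_p L_F < 1$. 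The main obstacle is really step two of existence: the compactness of $\mathbf{\widehat T}$ hinges on the subtle fact that the Lipschitz estimate for $F$ involves only the $\bL^2$-norm, which is exactly what makes Rellich's theorem applicable; without this, one would only obtain continuity in $\bV$ and Schauder's theorem would not apply.
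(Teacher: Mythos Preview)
Your proposal is correct. The paper itself does not supply a proof of this theorem: it simply records the statement and cites \cite[Theorem~3]{barnafi2018primal} for the argument, so there is no in-paper proof to compare against. That said, your route---Schauder on a closed ball using the uniform bound $M_F$ for the self-map, Rellich--Kondrachov together with the $\bL^2$-Lipschitz estimate \eqref{eqn:lipschitz} for compactness, and a direct contraction estimate for uniqueness---is exactly the standard machinery one expects for this class of semilinear problems and is consistent with how the cited reference proceeds. Your observation that compactness hinges on the Lipschitz bound being in the $\bL^2$-norm (rather than the full $\bH^1$-norm) is the right diagnosis of why Schauder, rather than Banach, is needed for existence.
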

Throughout the manuscript, we will use $\bV$ to write our problem in order to avoid technicalities regarding the implementation of rigid body modes $\mathbb{RM}$ whenever $\kappa=0$. We enforce it in practice by means of an adequate Lagrange multiplier as in \cite{barnafi2018primal}. 

\section{Adaptive discretisation scheme}\label{section:fem}
Let $\bV_h$ be a finite dimensional subspace of $\bV$ built out of a suitable mesh partition $\mathcal T_h$ of $\Omega$ into quadrilateral or hexahedral elements, where $h$ denotes the mesh size. The primal nonlinear discrete problem consists in finding  $\bu_h\in \bV_h$ such that
     \begin{equation}\label{eqn:primdisc}
    a(\bu_h,\bv_h)=\alpha F_{\bu_h}(\bv_h)\quad \forall\bv_h\in \bV_h.
    \end{equation}
Analogously to the continuous case, we consider the auxiliary linear problem: Given $\bz_h\in \bV_h$, find $\bu_h\in \bV_h$ such that
     \begin{equation}\label{eqn:primdisc-zH}
     a(\bu_h,\bv_h)=\alpha F_{\bz_h}(\bv_h)\quad \forall\bv_h\in\bV_h,
     \end{equation}
and also let  $T_h: \bV_h\rightarrow \bV_h$ be the discrete operator given by $T_h(\bz_h)=\bu_h$, where $\bu_h$ is the solution to problem (\ref{eqn:primdisc-zH}). Considering the same data assumptions as in the continuous case, as well as the continuity and bound obtained before, we arrive at the following result proven in \cite[Theorem 5]{barnafi2018primal}, adapted to include the terms arising from $\kappa\neq 0$.
\begin{theorem}
Assume that data assumptions {\rm(\ref{eqn:assumps})} hold. Then, the operator $T_h$ has at least one fixed point. Moreover, if $\alpha C_p L_F < 1$, then such fixed point is unique.
\end{theorem}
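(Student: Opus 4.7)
The plan is to adapt the continuous fixed-point argument underlying Theorem 2.2 to the finite-dimensional setting. Since $T_h$ acts on a finite-dimensional space, I can invoke Brouwer's fixed-point theorem directly for existence, avoiding the compact-embedding arguments needed in the continuous case; the contraction step for uniqueness is essentially identical at both levels.

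First, I would establish a discrete \emph{a priori} bound by testing the auxiliary problem \eqref{eqn:primdisc-zH} with $\bv_h = \bu_h$, exploiting coercivity of $a(\cdot,\cdot)$ on $\bV$ (which descends to $\bV_h\subset\bV$), and using the uniform bound on $F_{\bz_h}$ from \eqref{eqn:lipschitz}. This yields
$$\|\bu_h\|_{1,\Omega} \leq \alpha\, C_p\, \|F_{\bz_h}\|_{\bH^1(\Omega)'} \leq \alpha\, C_p\, M_F,$$
so $T_h$ maps the closed ball $\overline{B_h} \coloneqq \{\bv_h\in\bV_h \colon \|\bv_h\|_{1,\Omega}\leq \alpha C_p M_F\}$ into itself; this set is convex and compact because $\bV_h$ has finite dimension.

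Next, I would verify that $T_h$ is Lipschitz continuous by a subtraction argument. For $\bz_h^1,\bz_h^2\in\bV_h$ with $\bu_h^i = T_h(\bz_h^i)$, linearity of the auxiliary problem gives
$$a(\bu_h^1-\bu_h^2,\bv_h) = \alpha\bigl(F_{\bz_h^1}(\bv_h)-F_{\bz_h^2}(\bv_h)\bigr) \quad \forall\, \bv_h\in \bV_h.$$
Testing with $\bv_h = \bu_h^1-\bu_h^2$, applying coercivity, and invoking the Lipschitz estimate in \eqref{eqn:lipschitz} (bounding $\|\cdot\|_{0,\Omega}$ by $\|\cdot\|_{1,\Omega}$) produces
$$\|\bu_h^1-\bu_h^2\|_{1,\Omega} \leq \alpha\, C_p\, L_F\, \|\bz_h^1-\bz_h^2\|_{1,\Omega}.$$
Existence of at least one fixed point then follows by applying Brouwer's theorem to the continuous self-map $T_h : \overline{B_h}\to \overline{B_h}$. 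When $\alpha C_p L_F < 1$, the same inequality shows that $T_h$ is a strict contraction on $\bV_h$, and Banach's fixed-point theorem delivers uniqueness.

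The main obstacle I foresee is ensuring that the coercivity constant $C_p$ used above is $h$-independent and matches the continuous one. For $\kappa>0$, this is immediate: coercivity of $a(\cdot,\cdot)$ on $\bH^1(\Omega)$ via Korn plus the boundary penalty transfers verbatim to the conforming subspace $\bV_h\subset\bV$. For $\kappa=0$, the subtlety is that $T_h$ is well-defined only on discrete spaces compatible with the orthogonality constraint against $\mathbb{RM}$; the Lagrange-multiplier realisation mentioned after Theorem 2.2 is precisely what guarantees this, and it is the mechanism by which the continuous Korn-type estimate (with constant $C_p$) carries over to $\bV_h$. Once this compatibility is secured, the rest of the proof is a direct transcription of the continuous argument into the finite-dimensional context.
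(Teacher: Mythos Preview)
Your proposal is correct. The paper does not supply its own proof of this statement, deferring instead to \cite[Theorem~5]{barnafi2018primal}; your argument via Brouwer's theorem on the closed ball of radius $\alpha C_p M_F$ for existence and the Banach contraction principle for uniqueness is the standard route and is precisely what one expects the cited reference to contain (with Schauder replacing Brouwer at the continuous level), and the subtlety you flag concerning the $h$-independence of $C_p$ in the $\kappa=0$ case is exactly the point that warrants care.
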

Next, we define for each $K\in\mathcal{T}_h$ the \emph{a-posteriori} error indicator
\begin{align}\label{eqn:estimatorprim}
\nonumber   \Theta_K^2 & \coloneqq h_K^2\|\alpha\boldsymbol{f}_{\bu_h}\! - \textbf{div}({\bf \mathcal{C}}\mathbf{e}(\bu_h))\|_{0,K}^2 +\!\!\!\sum_{e\in\mathcal{E}(K)\cap\mathcal{E}_h(\Omega)}\!\!\! h_e\|[{\bf \mathcal{C}}\mathbf{e}(\bu_h)\bn_e]\|_{0,e}^2 \\
&\qquad        + \!\!\! \sum_{e\in\mathcal{E}(K)\cap\mathcal{E}_h(\Gamma)}\!\!\!h_e\|{\bf \mathcal{C}}\mathbf{e}(\bu_h)\bn_e+\kappa\bu_h\|_{0,e}^2,
 \end{align}
where, according to (\ref{eqn:problem-f}),
$$\left.\boldsymbol{f}_{\bu_h}\right|_K(\boldsymbol{x})\coloneqq\big\{T(\boldsymbol{x}+\bu_h(\boldsymbol{x}))-R(\boldsymbol{x})\big\}\nabla T(\boldsymbol{x}+\bu_h(\boldsymbol{x}))\quad \forall\,\boldsymbol{x}\in K,$$
and introduce the global {\it a-posteriori} error estimator
\begin{equation*}
 \Theta\coloneqq\left\{ \sum_{K\in\mathcal{T}_h}\Theta_K^2\right\}^{1/2}.
\end{equation*}
 The following theorem constitutes the main result of this section.
 
\begin{theorem}
Let  $\bu\in \bV$ and  $\bu_h\in \bV_h$ be the solutions of {\rm(\ref{eqn:prim})} and {\rm(\ref{eqn:primdisc})}, respectively, and assume
 that $\alpha C_p L_F<1/2$. Then,  there exist positive constants $h_0,\,C_{\mathrm{rel}},\,C_{\mathrm{eff}}$ independent of $h$ such that for $h\leq h_0 $ there holds
\begin{equation}\label{eqn:cotasprim}
 C_{\mathrm{eff}}\Theta \leq \|\bu-\bu_h\|_{1,\Omega} \leq  C_{\mathrm{rel}}\Theta.
\end{equation}
\end{theorem}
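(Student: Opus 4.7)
The strategy is to adapt the standard Verfürth residual-based framework, following the blueprint of \cite{barnafi2021adaptive} but extending it to accommodate the Robin boundary residual $\mathcal{C}\mathbf{e}(\bu_h)\bn + \kappa \bu_h$. The proof splits naturally into reliability (upper bound) and efficiency (lower bound). Both parts rely on the coercivity of $a(\cdot,\cdot)$ on $\bV$: when $\kappa>0$, a Korn-type inequality combined with the boundary term controls $\|\cdot\|_{1,\Omega}$ directly, while for $\kappa=0$ coercivity is inherited from \cite{barnafi2018primal} on the quotient by rigid body modes $\mathbb{RM}$. The hypothesis $\alpha C_p L_F < 1/2$ is stronger than the uniqueness condition in the preceding theorem by a factor of $2$, which hints that it will be used to absorb a nonlinear contamination term of the form $\alpha L_F \|\bu-\bu_h\|_{1,\Omega}^2$ into the left-hand side.

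For the reliability bound, the plan is as follows. First, I write the residual $\mathcal{R}(\bv):=\alpha F_{\bu_h}(\bv)-a(\bu_h,\bv)$ and use Galerkin orthogonality $\mathcal{R}(\bv_h)=0$ to get $\mathcal{R}(\bv)=\mathcal{R}(\bv-I_h\bv)$ for a Scott--Zhang or Cl\'ement quasi-interpolant $I_h\bv \in \bV_h$. Element-wise integration by parts of $a(\bu_h,\bv-I_h\bv)$ generates precisely the three contributions in $\Theta_K^2$: the interior element residual $\alpha\boldsymbol{f}_{\bu_h}-\mathbf{div}(\mathcal{C}\mathbf{e}(\bu_h))$, the interior edge/face jumps $[\mathcal{C}\mathbf{e}(\bu_h)\bn_e]$, and critically, the boundary residual $\mathcal{C}\mathbf{e}(\bu_h)\bn_e+\kappa\bu_h$, which appears because the Robin condition $\mathcal{C}\mathbf{e}(\bu)\bn+\kappa\bu=\mathbf 0$ is only imposed variationally and not pointwise by $\bu_h$. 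Standard local approximation estimates $\|\bv-I_h\bv\|_{0,K}\lesssim h_K\|\bv\|_{1,\omega_K}$ and $\|\bv-I_h\bv\|_{0,e}\lesssim h_e^{1/2}\|\bv\|_{1,\omega_e}$ then give $|\mathcal{R}(\bv)|\leq C\Theta\|\bv\|_{1,\Omega}$. Combining this with the error identity $a(\bu-\bu_h,\bv)=\mathcal{R}(\bv)+\alpha(F_{\bu}(\bv)-F_{\bu_h}(\bv))$, choosing $\bv=\bu-\bu_h$, using coercivity of $a$ and the Lipschitz bound \eqref{eqn:lipschitz}, yields $\|\bu-\bu_h\|_{1,\Omega}^2 \leq C_1\Theta\|\bu-\bu_h\|_{1,\Omega} + \alpha L_F C_p\|\bu-\bu_h\|_{1,\Omega}^2$, and the hypothesis $\alpha C_p L_F<1/2$ absorbs the last term.

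For the efficiency bound, I proceed locally with Verf\"urth's bubble-function technique. For the element residual I test against $\psi_K(\alpha\boldsymbol{f}_{\bu_h}-\mathbf{div}(\mathcal{C}\mathbf{e}(\bu_h)))$ with $\psi_K$ the standard interior bubble, integrate by parts using $-\mathbf{div}(\mathcal{C}\mathbf{e}(\bu))=\alpha\boldsymbol{f}_{\bu}$, and use inverse estimates together with $\|\boldsymbol{f}_{\bu}-\boldsymbol{f}_{\bu_h}\|_{0,K}\leq L_f\|\bu-\bu_h\|_{0,K}$. For interior jumps I use a face bubble $\psi_e$ extended to the patch of two adjacent elements. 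The boundary residual is the delicate new term: I extend the face bubble $\psi_e$ to the single adjacent element $K$, test against $\psi_e(\mathcal{C}\mathbf{e}(\bu_h)\bn_e+\kappa\bu_h)$, and rewrite this as $\psi_e\mathcal{C}\mathbf{e}(\bu_h-\bu)\bn_e + \psi_e \kappa(\bu_h-\bu)$ on $e$ by exploiting the continuous Robin condition. Integration by parts transfers the normal stress inside $K$, picking up the element residual already controlled; the additional $\kappa(\bu_h-\bu)$ contribution on $e$ is handled via a scaled trace inequality $\|\bu_h-\bu\|_{0,e}^2\lesssim h_e^{-1}\|\bu_h-\bu\|_{0,K}^2 + h_e\|\bu_h-\bu\|_{1,K}^2$, which, after multiplication by $h_e$, yields the correct scaling to bound the boundary contribution by $\|\bu-\bu_h\|_{1,\omega_e}$ plus higher-order data oscillation.

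The main obstacle is precisely the new Robin boundary residual in the efficiency estimate: its presence couples a stress (order one in derivatives) with a zeroth-order Robin trace, so the $h_e$-weighting must be verified to be simultaneously optimal for both pieces after applying the trace and inverse inequalities on the bubble-extended strip. The reliability direction is more routine, with the only non-standard point being the need to confirm that the Korn/Robin coercivity constant stays uniform in $h$ and is compatible with the smallness condition $\alpha C_p L_F < 1/2$ used to close the nonlinear term; the threshold $h_0$ in the statement accounts for the usual higher-order contamination from approximating $\boldsymbol{f}_{\bu}$ by $\boldsymbol{f}_{\bu_h}$.
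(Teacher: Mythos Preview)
Your proposal is correct and follows essentially the same route as the paper: the reliability bound is obtained via coercivity of $a$, elementwise integration by parts producing the three residual contributions in $\Theta_K$, and Cl\'ement interpolation; the efficiency bound is obtained via Verf\"urth's element- and edge-bubble technique. Your handling of the Robin boundary residual in the efficiency direction (rewriting via the continuous Robin identity, integrating by parts into the single adjacent element, and controlling the extra $\kappa(\bu_h-\bu)$ trace by a scaled trace inequality) is precisely what is needed; the paper simply states the corresponding bound in Lemma~\ref{lem:effiprim} and omits these details.

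There is one noteworthy difference in the reliability argument. The paper defines the residual using $F_{\bu}$ and asserts $a(\bu-\bu_h,\bw_h)=0$, then applies the Cl\'ement bound to the contamination term $(F_{\bu}-F_{\bu_h})(\bw-\bw_h)$, which picks up an extra factor $h$ and is the source of the threshold $h_0$ in the statement. You instead define $\mathcal{R}$ with the computable $F_{\bu_h}$, so that genuine Galerkin orthogonality $\mathcal{R}(\bv_h)=0$ holds, and then absorb the full nonlinear term $\alpha(F_{\bu}-F_{\bu_h})(\bu-\bu_h)\leq \alpha L_F\|\bu-\bu_h\|_{0,\Omega}^2$ directly using coercivity and the hypothesis $\alpha C_p L_F<1/2$. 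Your variant is cleaner and, as written, does not actually require the smallness condition $h\leq h_0$; your closing remark that $h_0$ ``accounts for the usual higher-order contamination'' is therefore not needed in your own argument, though it correctly describes the mechanism in the paper's version.
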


{The reliability (upper bound in (\ref{eqn:cotasprim})) and the efficiency (lower bound in (\ref{eqn:cotasprim})) of $\Theta$  are established separately in the following two lemmas. }
\begin{lemma}\label{lem:relprim}
Assume that $\alpha C_pL_F<1/2$. Then,  there exist positive constants $h_0,\,C_{\mathrm{rel}}$ independent of $h$, such that for $h\leq h_0 $ there holds
 $$\|\bu-\bu_h\|_{\bV} \leq  C_{\mathrm{rel}}\,\Theta.$$
\end{lemma}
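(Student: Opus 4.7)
The plan is to reduce the nonlinear reliability bound to a standard residual-based a-posteriori argument for the linearised elasticity problem, and then absorb the nonlinear defect via the fixed-point condition $\alpha C_p L_F < 1/2$. I introduce the auxiliary continuous solution $\widetilde{\bu} := \mathbf{\widehat T}(\bu_h) \in \bV$, i.e., the unique solution of (\ref{eqn:prim-z}) with data $\bz = \bu_h$ provided by Theorem 2.1, and split
$$\|\bu-\bu_h\|_{\bV} \leq \|\bu-\widetilde{\bu}\|_{\bV} + \|\widetilde{\bu}-\bu_h\|_{\bV}.$$
The first term is the \emph{nonlinear defect}: since $\bu = \mathbf{\widehat T}(\bu)$ and $\widetilde{\bu} = \mathbf{\widehat T}(\bu_h)$, the stability bound of Theorem 2.1 together with the Lipschitz property (\ref{eqn:lipschitz}) gives
$$\|\bu-\widetilde{\bu}\|_{\bV} \leq \alpha C_p \|F_{\bu} - F_{\bu_h}\|_{{\bH^1(\Omega)}^{\prime}} \leq \alpha C_p L_F \|\bu-\bu_h\|_{0,\Omega} \leq \alpha C_p L_F \|\bu-\bu_h\|_{\bV}.$$

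For the second, linear, term I use a classical residual-based argument. Because both $\widetilde{\bu}$ and $\bu_h$ solve linear problems (continuous and discrete, respectively) with the same right-hand side $F_{\bu_h}$, Galerkin orthogonality $a(\widetilde{\bu} - \bu_h, \bv_h) = 0$ holds for every $\bv_h \in \bV_h$. Given $\bv \in \bV$, let $\Pi_h : \bV \to \bV_h$ be a Cl\'ement/Scott--Zhang-type quasi-interpolant compatible with the non-conforming forest-of-octrees mesh $\mathcal{T}_h$ and its hanging-node constraints. After subtracting $\Pi_h\bv$ and integrating by parts element-wise, the residual $\alpha F_{\bu_h}(\bv-\Pi_h\bv) - a(\bu_h,\bv-\Pi_h\bv)$ reorganises into bulk, interior-jump and boundary contributions. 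Here the Robin condition plays a decisive role: recalling that (\ref{eqn:problem}) imposes $\mathcal{C}\mathbf{e}(\bu)\bn + \kappa\bu = \boldsymbol{0}$ on $\partial\Omega$, the boundary flux from integration by parts and the $\kappa$-weighted boundary integral in $a$ combine precisely into the natural Robin boundary residual $\mathcal{C}\mathbf{e}(\bu_h)\bn_e + \kappa\bu_h$ on $\mathcal{E}_h(\Gamma)$, matching the third contribution of $\Theta_K$ in (\ref{eqn:estimatorprim}). Applying Cauchy--Schwarz with the standard quasi-interpolation estimates $\|\bv-\Pi_h\bv\|_{0,K} \lesssim h_K\|\bv\|_{1,\widetilde K}$ and $\|\bv-\Pi_h\bv\|_{0,e} \lesssim h_e^{1/2}\|\bv\|_{1,\widetilde e}$ on local patches $\widetilde K,\widetilde e$, yields $|a(\widetilde{\bu} - \bu_h, \bv)| \leq C\,\Theta\,\|\bv\|_{\bV}$. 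Testing with $\bv = \widetilde{\bu} - \bu_h$ and invoking the $\bV$-coercivity of $a$ (ensured by Korn's inequality together with either the Robin term when $\kappa > 0$, or orthogonality to $\mathbb{RM}$ when $\kappa = 0$) gives $\|\widetilde{\bu} - \bu_h\|_{\bV} \leq C\Theta$.

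Combining the two estimates produces $(1 - \alpha C_p L_F)\|\bu-\bu_h\|_{\bV} \leq C\Theta$; the hypothesis $\alpha C_p L_F < 1/2$ bounds the prefactor below by $1/2$, so $\|\bu-\bu_h\|_{\bV} \leq 2C\,\Theta$, which is the claim with $C_{\mathrm{rel}} = 2C$. The mesh-size restriction $h \leq h_0$ plays a purely technical role, guaranteeing both the validity of the quasi-interpolation estimates on adapted octree meshes (via the usual 2:1 balancing of refinement levels) and the $h$-uniform discrete coercivity of $a$ on $\bV_h$.

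The main obstacle I anticipate is the careful treatment of the Robin boundary contribution, which is the principal new ingredient with respect to the pure-Neumann analysis of \cite{barnafi2021adaptive}: one must ensure that $\Pi_h$ preserves traces on $\partial\Omega$ so that $\kappa\int_{\partial\Omega}\bu_h\cdot(\bv-\Pi_h\bv)\ds$ combines cleanly with the boundary flux $\int_{\partial K\cap\partial\Omega}(\mathcal{C}\mathbf{e}(\bu_h)\bn)\cdot(\bv-\Pi_h\bv)\ds$, and that the resulting boundary integrals admit the $h_e^{1/2}$-scaled trace estimate. A secondary subtlety is the $h$-uniform discrete coercivity of $a$ on $\bV_h$ when $\kappa = 0$, which is handled by enforcing orthogonality to $\mathbb{RM}$ at the discrete level through the Lagrange multiplier mentioned after Theorem 2.3.
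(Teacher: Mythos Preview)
Your argument is correct, but it follows a genuinely different route from the paper. The paper works directly with the residual $\mathcal{R}_h(\bw-\bw_h):=\alpha F_{\bu}(\bw-\bw_h)-a(\bu_h,\bw-\bw_h)$ built from the \emph{true} load $F_{\bu}$; after adding and subtracting $F_{\bu_h}$, the nonlinear defect appears as $\alpha(F_{\bu}-F_{\bu_h})(\bw-I_h\bw)$, which picks up a factor $h$ from the Cl\'ement estimate and is absorbed on the left only when $h\le h_0:=1/(2c_1\alpha L_F)$. In your approach, by contrast, the splitting via $\widetilde{\bu}=\mathbf{\widehat T}(\bu_h)$ cleanly separates the nonlinear defect (controlled by the contraction $\alpha C_p L_F<1/2$ alone, with no $h$ factor) from a genuinely linear a-posteriori estimate for which exact Galerkin orthogonality $a(\widetilde{\bu}-\bu_h,\bv_h)=0$ holds. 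This is arguably more transparent: the roles of the fixed-point hypothesis and of the residual machinery are disentangled, and the absorption step $(1-\alpha C_p L_F)\|\bu-\bu_h\|_{\bV}\le C\Theta$ requires no mesh restriction.

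One point to correct in your write-up: your explanation of the role of $h_0$ is not the one in the paper. In the paper's proof $h_0$ is \emph{not} a technicality about quasi-interpolation on octrees or discrete coercivity; it is the threshold that makes the $h$-weighted nonlinear term $\alpha c_1 L_F h\|\bu-\bu_h\|_{1,\Omega}$ absorbable into the left-hand side. Your route simply does not produce such a term, so within your argument $h_0$ is superfluous and you could drop it (the statement would then be slightly stronger than the paper's).
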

 \begin{proof}
Let us first define
$$\mathcal{R}_h(\bw-\bw_h)\coloneqq\alpha F_{\bu}(\bw-\bw_h)-a(\bu_h,\bw-\bw_h)\quad \forall\bw_h\in \bV_h.$$
 As a consequence of the ellipticity of $a$ (c.f. (\ref{a-primal})) with ellipticity constant $\bar{\alpha}$ (c.f. \cite[Corollary 11.2.22]{brenner2008mathematical}), we obtain the following condition
 \begin{equation*}
 \bar{\alpha}\|\bv\|_{1,\Omega} \leq\sup_{\substack{\bw\in \bH^1(\Omega)\\ \bw\neq\boldsymbol{0}}}\frac{a(\bv,\bw)}{\|\bw\|_{\bV}}\quad \forall\bv\in \bH^1(\Omega).
 \end{equation*}
 In particular, for $\bv\coloneqq\bu-\bu_h\in \bH^1(\Omega)$, we notice from (\ref{eqn:prim}) and (\ref{eqn:primdisc}) that $a(\bu-\bu_h,\bw_h)=0$ $\,\forall\bw_h\in \bV_h$, and hence we obtain $a(\bu-\bu_h,\bw)=a(\bu-\bu_h,\bw-\bw_h)=\mathcal{R}_h(\bw-\bw_h)$, which yields
 \begin{equation}\label{eqn:issus}
 \bar{\alpha}\|\bu-\bu_h\|_{1,\Omega} \leq\sup_{\substack{\bw\in \bH^1(\Omega)\\ \bw\neq\boldsymbol{0}}}\frac{\mathcal{R}_h(\bw-\bw_h)}{\|\bw\|_{1,\Omega}}\quad \forall\bw_h\in \bV_h.
 \end{equation}
 From the definition of $\mathcal{R}_h(\bw-\bw_h)$, integrating by parts on each $K\in\mathcal{T}_h$, and adding and subtracting  a suitable term, we can write
 \begin{equation}\label{eqn:R}
 \begin{array}{l}
 \mathcal{R}_h(\bw-\bw_h)=\,\alpha F_{\bu_h}(\bw-\bw_h)+\alpha F_{\bu}(\bw-\bw_h)-a(\bu_h,\bw-\bw_h)-\alpha F_{\bu_h}(\bw-\bw_h)\\
              \quad \displaystyle{=\,\alpha\left\{F_{\bu}(\bw-\bw_h)-F_{\bu_h}(\bw-\bw_h)\right\}-\alpha\int_{\Omega}\boldsymbol{f}_{\bu_h}\cdot (\bw-\bw_h)}
             \\
 \qquad               \displaystyle{\,-\sum_{K\in\mathcal{T}_h}\int_K{\bf \mathcal{C}}\mathbf{e}(\bu_h):\mathbf{e}(\bw-\bw_h)} \\
           \quad   =\displaystyle{\,\alpha\{(F_{\bu}-F_{\bu_h})(\bw-\bw_h)\}-\alpha\int_{\Omega}\boldsymbol{f}_{\bu_h}\cdot (\bw-\bw_h)}\\
              \qquad \displaystyle{\,-\sum_{K\in\mathcal{T}_h} \left\{-\int_K\mathbf{div}({\bf \mathcal{C}}\mathbf{e}(\bu_h))\cdot(\bw-\bw_h)+\int_{\partial K}({\bf \mathcal{C}}\mathbf{e}(\bu_h) \bn_e)\cdot(\bw-\bw_h)\right\},}\\
              \quad \displaystyle{=\,\alpha\{(F_{\bu}-F_{\bu_h})(\bw-\bw_h)\}+\sum_{K\in\mathcal{T}_h}\int_K(\mathbf{div}({\bf \mathcal{C}}\mathbf{e}(\bu_h))-\alpha\boldsymbol{f}_{\bu_h})\cdot (\bw-\bw_h)}\\
               \qquad\displaystyle{\,-\sum_{e\in\mathcal{E}_h(\Omega)}\int_e\left[ ({\bf \mathcal{C}}\mathbf{e}(\bu_h) \bn_e)\right] \cdot(\bw-\bw_h)
               -\sum_{e\in\mathcal{E}_h(\Gamma)}\int_e({\bf \mathcal{C}}\mathbf{e}(\bu_h) \bn_e +\kappa\bu_h)\cdot(\bw-\bw_h).}
 \end{array}
 \end{equation}
 Then, choosing $\bw_h$ as the Cl\'ement interpolant of $\bw$, that is $\bw_h\coloneqq I_h(\bw)$, the approximation properties of $I_h$ yield \cite{clement1975approximation}
 \begin{align}\label{eqn:clement}
 \left\| \bw-\bw_h \right\|_{0,K}\leq  c_1h_K\left\| \bw\right\|_{1,{\square(}K)}, \qquad 
 \left\| \bw-\bw_h \right\|_{0,e}\leq  c_2h_e\left\| \bw\right\|_{1,{\square(}e)},
 \end{align}
 where ${\square(}K)\coloneqq \cup\{K'\in \mathcal{T}_h: K'\cap K \neq \emptyset\}$ and $ {\square(}e) \coloneqq \cup\{K'\in \mathcal{T}_h: K'\cap e \neq \emptyset\}$.  In this way, applying the Cauchy--Schwarz inequality to each term (\ref{eqn:R}),
 and making use of (\ref{eqn:clement}) together with  the
 Lipschitz continuity of $F_{\bu}$ (cf. (\ref{eqn:lipschitz})), we obtain
 \begin{align*}
 \mathcal{R}_h(\bw-\bw_h) &\,\le\,
 \alpha c_1L_Fh_K\|\bu-\bu_h\|_{1,\Omega}\|\bw\|_{1,{\square(}K)} \\
& \quad
 + \,\, \widehat{C}\left\{ \sum_{K\in\mathcal{T}_h}\Theta_K^2\right\}^{1/2}
 \left\{\sum_{K\in\mathcal{T}_h}\|\bw\|_{1,{\square(}K)}^2+\sum_{e\in\mathcal{E}_h(\Omega)}\,
 \|\bw\|_{1,{\square(}e)}^2\right\}^{1/2}\,,
 \end{align*}
 where $\widehat{C}$ is a constant depending on $c_1$ and $c_2$ and $\Theta_K^2$ defined by (\ref{eqn:estimatorprim}). Additionally using the fact that the number of elements in
 ${\square(}K)$ and ${\square(}e)$ is bounded, we have
 $$\sum_{K\in\mathcal{T}_h}\|\bw\|_{1,{\square(}K)}^2\leq C_1\|\bw\|_{1,\Omega}^2\qquad{\rm and}\qquad\sum_{e\in\mathcal{E}_h(\Omega)}\|\bw\|_{1,{\square(}e)}^2\leq C_2\|\bw\|_{1,\Omega}^2,$$
 where $C_1,\,C_2$ are positive constants,  and using that $\alpha C_p L_F\leq 1/2$, it follows that $h_0\coloneqq 1/(2c_1\alpha L_F)$. Finally, substituting in (\ref{eqn:issus}) we conclude that
 $$\|\bu-\bu_h\|_{1,\Omega} \leq  C_{\mathrm{rel}}\,\Theta,$$
 where $C_{\mathrm{rel}}$ is independent of $h$.
 \end{proof}

The efficiency bound requires using a localisation technique based on element-bubble and edge-bubble functions. Given $K\in\mathcal{T}_h$ and $e\in\mathcal{E}(K)$, we define $\psi_K$ and $\psi_e$ the typical {element-} and edge-bubble functions \cite[eqs. (1.5)-(1.6)]{verfurth1999review}, which satisfy:
\begin{itemize}
\item [(i)] $\psi_K\in P_3(K)$, $\psi_K=0$ on $\partial K$, $\mathrm{supp}(\psi_K)\subseteq K$, and $0\leq\psi_K\leq 1$ in $K$,
\item [(ii)]  $\psi_e\in P_2(K)$, $\psi_e=0$ on $\partial K$, $\mathrm{supp}(\psi_e)\subseteq \omega_e$, and $0\leq\psi_e\leq 1$ in $\omega_e$,
\end{itemize}
where $\omega_e\coloneqq\cup\{K^{\prime}\in\mathcal{T}_h:\,\,e\in\mathcal{E}(K^{\prime})\}$.
Additional properties of $\psi_K$ and $\psi_e$ are collected in the following lemma (c.f. \cite[Lemma 1.3]{verfurth1994posteriori}, \cite[Section 3.4]{verfurth1996review}  or \cite[Section 4]{verfurth1999review}).
\begin{lemma}\label{lem:bubbles}
Given $k\in\mathbb{N}\cup\{0\}$, there exist positive constants $\gamma_1$, $\gamma_2$, $\gamma_3$, $\gamma_4$ and $\gamma_5$, depending only on k and the shape regularity of the
triangulations, such that for each $K\in\mathcal{T}_h$ and $e\in\mathcal{E}(K)$, there hold
\begin{equation}\label{eqn:bubble}
\begin{array}{rlll}
\gamma_1\left\| q\right\|_{0,K}^2&\leq& \left\| \psi_K^{1/2}q\right\|_{0,K}^2&\quad \forall\, q\in P_k(K),\\[1ex]
\left\| \psi_Kq\right\|_{1,K}&\leq& \gamma_2h_K^{-1}\left\| q\right\|_{0,K}&\quad \forall\, q\in P_k(K),\\[1ex]
\gamma_3\left\| p\right\|_{0,e}^2&\leq& \left\| \psi_e^{1/2}p\right\|_{0,e}^2&\quad \forall\, p\in P_k(e),\\[1ex]
\left\| \psi_ep\right\|_{1,\omega_e}&\leq& \gamma_4h_e^{-1/2}\left\| p\right\|_{0,e}&\quad \forall \,p\in P_k(e),\\[1ex]
\left\| \psi_ep\right\|_{0,\omega_e}&\leq &\gamma_5h_e^{1/2}\left\| p\right\|_{0,e}&\quad \forall\, p\in P_k(e).
\end{array}
\end{equation}
\end{lemma}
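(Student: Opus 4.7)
The plan is to reduce all five estimates in \eqref{eqn:bubble} to analogous inequalities on a fixed reference element $\widehat K$ (a unit square or cube) and a fixed reference edge $\widehat e$, and then recover the $h_K$- and $h_e$-dependent constants by a standard scaling argument. For each $K\in\mathcal{T}_h$ one has a smooth (affine/bilinear/trilinear) bijection $F_K:\widehat K\to K$ whose Jacobian satisfies $|\det DF_K|\sim h_K^d$ and $\|DF_K\|,\|DF_K^{-1}\|\sim h_K,h_K^{-1}$ under the shape-regularity hypothesis; analogous estimates hold for the restriction to an edge/face $e$, with the obvious change of dimension. Under the pullback $\widehat q=q\circ F_K$, the bubble function $\psi_K$ is, by construction, the image of a \emph{fixed} reference bubble $\widehat\psi_{\widehat K}$ (and similarly $\psi_e$ of $\widehat\psi_{\widehat e}$), so that the inequalities become $h$-independent statements on $\widehat K$ (or on $\omega_{\widehat e}$).

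The first and third inequalities are then norm-equivalence statements on finite-dimensional spaces. Indeed, the maps $\widehat q\mapsto \|\widehat\psi_{\widehat K}^{1/2}\widehat q\|_{0,\widehat K}$ and $\widehat q\mapsto\|\widehat q\|_{0,\widehat K}$ are both norms on $P_k(\widehat K)$: they are clearly seminorms, and the first is definite because $\widehat\psi_{\widehat K}>0$ on the interior of $\widehat K$, so $\widehat\psi_{\widehat K}^{1/2}\widehat q\equiv 0$ forces $\widehat q$ to vanish on an open set, hence (being a polynomial) to vanish identically. Since all norms on the finite-dimensional space $P_k(\widehat K)$ are equivalent, one obtains a constant $\widehat\gamma_1>0$, depending only on $k$, realising the first estimate on $\widehat K$. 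The same reasoning on $P_k(\widehat e)$ gives $\widehat\gamma_3$ for the third.

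The remaining inequalities (second, fourth, fifth) combine two ingredients: (a) the inverse inequality $\|\widehat\psi_{\widehat K}\widehat q\|_{1,\widehat K}\lesssim \|\widehat q\|_{0,\widehat K}$ (and its edge analogue), which again follows from equivalence of the $H^1$ and $L^2$ norms on the finite-dimensional space $\widehat\psi_{\widehat K}\cdot P_k(\widehat K)$; and (b) extending $p\in P_k(e)$ to $\omega_e$ by constant prolongation along the direction normal to $e$, so that the edge polynomial admits a natural pullback to $\omega_{\widehat e}$ whose $L^2$- and $H^1$-seminorms on $\omega_{\widehat e}$ are controlled by $\|\widehat p\|_{0,\widehat e}$. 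Transferring these reference estimates back to $K$ and $\omega_e$ through $F_K$ produces exactly the powers of $h_K,h_e$ displayed in \eqref{eqn:bubble}: the $h_K^{-1}$ in the second inequality comes from differentiation picking up one factor of $DF_K^{-1}$; the $h_e^{-1/2}$ and $h_e^{1/2}$ in the fourth and fifth come from combining the $(d{-}1)/2$ vs.\ $d/2$ mismatch between an $L^2(e)$-norm and an $L^2(\omega_e)$-norm with the extra $h_e^{-1}$ from differentiation (respectively, its absence).

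The main subtlety, rather than an obstacle, is bookkeeping for the edge-bubble estimates: $\omega_e$ consists of \emph{two} (or more) elements meeting at $e$, so the reference configuration must be a patch rather than a single $\widehat K$, and one has to check that, under shape-regularity, each cell of $\omega_e$ is the image of a uniformly-controlled bijection onto a cell of the reference patch; moreover, the normal extension used in (b) must be defined consistently on the two sides of $e$ so that the resulting function lies in $H^1(\omega_e)$ (continuity across $e$ is automatic since both sides share the same trace $p$). Once these mappings are fixed, all five bounds in \eqref{eqn:bubble} follow by assembling the reference-patch estimates with the scaling rules above, and the constants $\gamma_1,\dots,\gamma_5$ depend only on $k$ and on the shape-regularity of $\mathcal{T}_h$, as claimed.
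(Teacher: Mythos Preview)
Your proof plan is correct and follows the standard scaling/norm-equivalence argument. Note, however, that the paper does not actually prove this lemma: it is stated with a citation to Verf\"urth's works (\cite[Lemma 1.3]{verfurth1994posteriori}, \cite[Section 3.4]{verfurth1996review}, \cite[Section 4]{verfurth1999review}) and used as a black box. Your sketch is essentially the argument one finds in those references, so there is no methodological divergence---you have simply filled in what the paper delegates to the literature.

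One minor remark: the paper works with quadrilateral/hexahedral meshes, so the maps $F_K$ are bilinear/trilinear rather than affine, and the bubble functions are tensor-product ($Q$-type) rather than simplicial ($P$-type); your plan already accommodates this (``affine/bilinear/trilinear''), and the norm-equivalence argument goes through unchanged since the relevant spaces remain finite-dimensional and the Jacobian bounds still follow from shape regularity.
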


The efficiency (lower bound in (\ref{eqn:cotasprim})) is established with the help of the following lemma whose proof is a slight modification of \cite[Section 6]{verfurth1999review}.
\begin{lemma} \label{lem:effiprim}
    There exist constants $\eta_1,\eta_2,\eta_3>0$ and $C_{\mathrm{eff}}>0$, independent of $h$, but depending on $\gamma_i$ for $i\in\{1,\dots,5\}$ from Lemma~\ref{lem:bubbles} , such that for each $K\in\mathcal{T}_h$ there holds
\begin{align*}
h_K\left\|\alpha\boldsymbol{f}_{\bu_h} - \textrm{\bf div}({\bf \mathcal{C}}\mathbf{e}(\bu_h))\right\|_{0,K}&\leq \eta_1\| \bu-\bu_h\|_{0,K},\\ h_e^{1/2}\|[{\bf \mathcal{C}}\mathbf{e}(\bu_h)\cdot\bn_e]\|_{0,e}&\leq \eta_2\left\{\| \bu-\bu_h\|_{0,\omega_e}+\sum_{K\in \omega_e}h_K\|\bu-\bu_h\|_{0,K} \right\}, \\
h_e^{1/2}\|{\bf \mathcal{C}}\mathbf{e}(\bu_h)\cdot\bn_e\|_{0,e}&\leq \eta_3\| \bu-\bu_h\|_{0,K},
\end{align*}
where $\omega_e\coloneqq\cup\{K^{\prime}\in\mathcal{T}_h:\,\,e\in\mathcal{E}(K^{\prime})\}$. Further, it holds that
\begin{equation*}
    C_{\mathrm{eff}}\Theta \leq \|\bu-\bu_h\|_{1,\Omega}.
\end{equation*}

\end{lemma}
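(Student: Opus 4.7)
My plan is to prove the three local bounds one at a time using the classical bubble-function technique of Verf\"urth, and then to obtain the global efficiency $C_{\mathrm{eff}}\Theta\leq\|\bu-\bu_h\|_{1,\Omega}$ by squaring, summing over $\mathcal{T}_h$ and $\mathcal{E}_h$, and invoking the bounded overlap of the patches $\square(K)$ and $\omega_e$ implied by the shape regularity of the mesh.

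For the element residual, I would set $\boldsymbol{\rho}_K\coloneqq \alpha\boldsymbol{f}_{\bu_h}-\mathbf{div}(\mathcal{C}\mathbf{e}(\bu_h))$ and test against $\boldsymbol{\phi}_K\coloneqq \boldsymbol{\rho}_K\psi_K$, extended by zero to $\Omega$. The first equivalence in Lemma~\ref{lem:bubbles} gives $\gamma_1\|\boldsymbol{\rho}_K\|_{0,K}^2\leq \int_K\boldsymbol{\rho}_K\cdot\boldsymbol{\phi}_K$. Since $\boldsymbol{\phi}_K$ vanishes on $\partial K$, integrating by parts and inserting the strong-form continuous equation $-\mathbf{div}(\mathcal{C}\mathbf{e}(\bu))=\alpha\boldsymbol{f}_{\bu}$ recasts the right-hand side as
$$\int_K\mathcal{C}\mathbf{e}(\bu_h-\bu):\mathbf{e}(\boldsymbol{\phi}_K)+\alpha\int_K(\boldsymbol{f}_{\bu_h}-\boldsymbol{f}_{\bu})\cdot\boldsymbol{\phi}_K.$$
Applying Cauchy--Schwarz, the Lipschitz estimate (\ref{eqn:assumps}), and the inverse-type bubble bound $\|\boldsymbol{\phi}_K\|_{1,K}\leq \gamma_2 h_K^{-1}\|\boldsymbol{\rho}_K\|_{0,K}$ from Lemma~\ref{lem:bubbles}, then multiplying through by $h_K$, yields the first local estimate.

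For an interior edge $e$, I would set $\boldsymbol{\phi}_e\coloneqq [\mathcal{C}\mathbf{e}(\bu_h)\bn_e]\psi_e$ viewed on the patch $\omega_e$; summing the element-wise integration-by-parts identity over $K\in\omega_e$ and using that $[\mathcal{C}\mathbf{e}(\bu)\bn_e]=0$ for the exact solution produces an identity expressing $\int_e[\mathcal{C}\mathbf{e}(\bu_h)\bn_e]\cdot\boldsymbol{\phi}_e$ as the sum of an element-residual contribution, the bilinear pairing $\int_{\omega_e}\mathcal{C}\mathbf{e}(\bu-\bu_h):\mathbf{e}(\boldsymbol{\phi}_e)$, and the Lipschitz remainder $\alpha\int_{\omega_e}(\boldsymbol{f}_{\bu}-\boldsymbol{f}_{\bu_h})\cdot\boldsymbol{\phi}_e$. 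Bounding the right-hand side with the edge-bubble inequalities of Lemma~\ref{lem:bubbles}, inserting the already-proved element-residual estimate, and multiplying by $h_e^{1/2}$ delivers the second inequality. The boundary-edge estimate follows the same template, with the continuous Robin condition $\mathcal{C}\mathbf{e}(\bu)\bn+\kappa\bu=\boldsymbol{0}$ replacing the jump cancellation; this substitution is the only step that genuinely differs from the pure Neumann case treated in \cite{barnafi2021adaptive}, and the extra $\kappa\bu_h$ contribution is controlled by a trace inequality on $\omega_e$.

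The main obstacle is that $\boldsymbol{f}_{\bu_h}$ is not polynomial, so the bubble/inverse inequalities cannot be applied to the element residual directly. The remedy is the pivot through the continuous load just described: the PDE identity $\alpha\boldsymbol{f}_{\bu}=-\mathbf{div}(\mathcal{C}\mathbf{e}(\bu))$ converts the bad term into $\mathbf{div}(\mathcal{C}\mathbf{e}(\bu_h-\bu))$, which is then amenable to the bubble-inverse inequality, plus a Lipschitz remainder that is of lower order. This remainder is absorbed into the left-hand side once the three local bounds are squared and summed, using the smallness condition $\alpha C_p L_F<1/2$ already invoked in the reliability proof, and what remains is precisely $C_{\mathrm{eff}}\Theta\leq\|\bu-\bu_h\|_{1,\Omega}$.
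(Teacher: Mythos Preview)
Your approach is correct and matches the paper's: both apply the bubble inequalities of Lemma~\ref{lem:bubbles} to the element residual, pivot through the continuous equation $-\mathbf{div}(\mathcal{C}\mathbf{e}(\bu))=\alpha\boldsymbol{f}_{\bu}$ to rewrite it as $\alpha(\boldsymbol{f}_{\bu_h}-\boldsymbol{f}_{\bu})-\mathbf{div}(\mathcal{C}\mathbf{e}(\bu_h-\bu))$, integrate by parts, and control the load difference via the Lipschitz assumption~(\ref{eqn:assumps}); the paper only writes out the first bound and defers the edge terms to standard Verf\"urth arguments. One correction to your final paragraph: the smallness condition $\alpha C_pL_F<1/2$ is \emph{not} needed for efficiency, since the Lipschitz remainder $\alpha L_f\|\bu-\bu_h\|_{0,K}$ already sits on the error side of the local bound and is simply absorbed into the constants $\eta_i$ (there is nothing to move to the left), and indeed the paper does not invoke it here.
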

\begin{proof}
Using the properties of bubble functions, and letting $R_K(\bu_h)\coloneqq\alpha\boldsymbol{f}_{\bu_h} - \textrm{\bf div}({\bf \mathcal{C}}\mathbf{e}(\bu_h))$ we have
\begin{align*}
\left\| R_K(\bu_h)\right\|_{0,K}^2&\leq \gamma_1^{-1}\left\|\psi_K^{1/2}R_K(\bu_h)\right\|_{0,K}^2\\
&=\frac{1}{\gamma_1}\int_K \alpha\psi_KR_K(\bu_h)\{\boldsymbol{f}_{\bu_h}-\boldsymbol{f}_{\bu}\} -\frac{1}{\gamma_1}\int_K\psi_KR_K(\bu_h)\{ \textrm{\bf div}({\bf \mathcal{C}}\mathbf{e}(\bu_h)-{\bf \mathcal{C}}\mathbf{e}(\bu))\},\\
&=\frac{1}{\gamma_1}\int_K \alpha\psi_KR_K(\bu_h)\big\{\boldsymbol{f}_{\bu_h}-\boldsymbol{f}_{\bu}\} +\frac{1}{\gamma_1}\int_K({\bf \mathcal{C}}\mathbf{e}(\bu_h)-{\bf \mathcal{C}}\mathbf{e}(\bu))\cdot\nabla(\psi_KR_K(\bu_h)),\\
&\leq \frac{\alpha}{\gamma_1}\|R_K(\bu_h)\|_{0,K}\|\boldsymbol{f}_{\bu_h}-\boldsymbol{f}_{\bu}\|_{0,K} \\
& \qquad + \frac{\gamma_2}{\gamma_1h_K}\|{\bf \mathcal{C}}\mathbf{e}(\bu_h)-{\bf \mathcal{C}}\mathbf{e}(\bu)\|_{0,K}\|R_K(\bu_h)\|_{0,K},
\end{align*}
where, for the last inequality we used the inverse inequality. Next, we have
$$h_K\|R_K(\bu_h)\|_{0,K}\leq \alpha h_K \gamma_1^{-1}\|\boldsymbol{f}_{\bu_h}-\boldsymbol{f}_{\bu}\|_{0,K}+ \gamma_1^{-1}\gamma_2\|{\bf \mathcal{C}}\mathbf{e}(\bu_h)-{\bf \mathcal{C}}\mathbf{e}(\bu)\|_{0,K},$$
now, using (\ref{eqn:assumps}) and grouping terms, we conclude with $\eta_1>0$ independent of $h$, that 
$$h_K\left\|\alpha\boldsymbol{f}_{\bu_h} - \textrm{\bf div}({\bf \mathcal{C}}\mathbf{e}(\bu_h))\right\|_{0,K}\leq \eta_1\| \bu-\bu_h\|_{0,K}.$$
 We omit further details and repeated arguments used for the remaining inequalities.
\end{proof}

 \section{Solution strategy}\label{section:solution strategy}
 In this section we present how we propose to solve \eqref{eqn:problem}, which is mainly based on an accelerated proximal-point algorithm formulated in an infinite-dimensional setting. The \emph{proximal point method} \cite{rockafellar1976monotone} consists in adding a convex term to the original problem, so that given a solution $\bu^k$, we can rewrite the minimisation problem as
    \begin{equation}\label{eqn:pvar-time}
    \inf_{\bu\in \mathcal V} \alpha \mathcal D[\bu;R,T] + \mathcal S[\bu] + \frac{1}{\dt}\|\bu - \bu^k\|_{\bV},
    \end{equation}
    where the pseudo-timestep $\Delta t$ is a regularisation parameter, and we denote the solution as $\bu^{k+1}$. This scheme can be shown to converge under typically mild hypotheses \cite{kaplan1998proximal}, and its related Euler--Lagrange equations are given by
    \begin{equation}\label{eqn:problem-time}
      \frac{1}{\dt}\mathcal L \left(\bu^{k+1} - \bu^k\right) - \mathbf{div}\,(\mathcal{C}\epsilon(\bu^{k+1})) = \alpha \boldsymbol{f}_{\bu^{k+1}}, 
    \end{equation}
    where $\mathcal L$ is the operator induced by the inner product defining the norm in $\bV$. This can possibly induce a modification in the boundary conditions that is proportional to the velocity term $\dt^{-1}\left(\bu^{k+1} - \bu^k\right)$. This problem is still highly nonlinear because of the nonlinear term $f_{\bu^{k+1}}$, and so a common strategy is to treat it explicitly \cite{modersitzki2003numerical}, which yields the following semi-implicit (or IMEX) problem:
    \begin{equation}\label{eqn:problem-time-imex}
      \frac{1}{\dt}\mathcal L \bu^{k+1} - \mathbf{div}\,(\mathcal{C}\epsilon(\bu^{k+1})) = \frac{1}{\dt}\mathcal L \bu^k + \alpha \boldsymbol{f}_{\bu^k}. 
    \end{equation}
    This formulation has been shown to be stable under the timestep condition $\dt \approx 1/\alpha$ \cite{barnafi2018primal}. We now describe all the numerical choices that yield an efficient solution strategy of problem \eqref{eqn:problem}, based on the iterated solution of \eqref{eqn:problem-time-imex}. The use of a semi-implicit formulation yields a simpler problem to solve at each instant, but in turn it implies a CFL condition relating the pseudo-timestep and the spatial discretisation in order to have stability. Such a stability condition has not been established rigorously for this type of problem to the best of the authors knowledge.

    \paragraph{The proximal operator.} We consider two options for the operator $\mathcal L$. On the one hand, an $L^2$-norm in the regularised formulation \eqref{eqn:pvar-time}, which results in $\mathcal L=I$ the identity operator. On the other hand, an $H^1$-norm which results in $\mathcal L=-\Delta + I$, which gives rise to Sobolev gradient stabilisation in the context of Levemberg--Marquardt methods \cite{kazemi2012levenberg}. We then note that \eqref{eqn:problem-time-imex} written in terms of the increment $\delta \bu^{k+1}$ results in 
\begin{equation*}
  \left(\frac{1}{\dt}\mathcal L  - \mathbf{div}\,\mathcal{C}\epsilon\right)\delta \bu^{k+1} = \mathbf{div}\,(\mathcal{C}\epsilon(\bu^{k})) + \alpha \boldsymbol{f}_{\bu^k}, 
\end{equation*}
where the right-hand side $\mathbf{div}\,(\mathcal{C}\epsilon(\bu^{k})) + \alpha \boldsymbol{f}_{\bu^k}$ is the residual of the original problem \eqref{eqn:problem}. This shows that this method can also be interpreted as a Levemberg--Marquardt type of algorithm \cite{nocedal1999numerical}. The stabilisation matrix in this case is the operator $\mathcal L$, and the gradient $\nabla_{\bu} \boldsymbol{f}_{\bu}$ is neglected from the complete Jacobian, given by
$$ \mathcal J(\bu) = d^2\mathcal S + \alpha(d^2\mathcal D(\bu)) = -\mathbf{div}\,\mathcal C\epsilon + \alpha\underbrace{\left(\nabla T(\ten I + \bu)\otimes \nabla T(\ten I + \bu) + (T(\ten I + \bu) - R) \mathbf{H}T(\ten I + \bu)\right)}_{\nabla_{\bu}\boldsymbol{f}_{\bu}}, $$
where $\mathbf{H}T$ stands for the Hessian of $T$ and $\ten I$ is the identity function.

\paragraph{Discrete spaces.} The left-hand side operator in \eqref{eqn:problem-time-imex} consists in a linear elasticity operator plus an identity, so we can use $\bH^1$-conforming \acp{fe}. In particular, we consider a vector-valued Lagrangian \ac{fe} space. For a quadrilateral ($d=2$) or hexahaedral ($d=3$) discretization $\mathcal T_h$ of $\Omega$, this space is given by 
    $$ Q_h^k = \{ \bv \in C(\Omega, \bbR^d): \bv|_K \in \bbQ^d_k(K)\quad\forall K \in \mathcal T_h\}, $$
    where $\bbQ^d_k=\bbQ_k \otimes \ldots \otimes \bbQ_k$, and $\bbQ_k$ is the space of $d$-variate tensor-product polynomials of partial degree at most $k$ with respect to each variable, for each of the $d$ components of the vector-valued field.

\paragraph{Solvers in use.} To solve the linear system arising from \eqref{eqn:problem-time-imex}, we use an efficient sparse LU factorisation, which we compute once and then reuse it throughout all time iterations.

\subsection{Time acceleration}\label{section:aa}
Note that we can rewrite \eqref{eqn:problem-time-imex} as 
    \begin{equation}    
        G(\bu^{k+1}) = F(\bu^k),
    \end{equation}
    and, as $G$ is invertible, the solution can be characterised as a fixed point of the operator $T = G^{-1} \circ F$. This motivates using \ac{aa} as a fixed-point acceleration algorithm. The acceleration of time iterations to compute steady state computations has already been successfully used in nonlinear poroelasticity \cite{barnafi2024fully} using \ac{aa}, so we adopt the same strategy. \ac{aa} of depth $m$ consists in the following: consider a fixed-point iteration function $g$, whose iterations are given by $x^{k+1} = g(x^k)$. Then, at each iteration $k$:
    \begin{enumerate}
        \item Define the matrix $F_k = [f_{k-m}, \hdots, f_k]$, where $f_\ell = g(x^\ell) - x^\ell$.
        \item Determine $\alpha = (\alpha_0, \hdots, \alpha_k)$ that solves
                $$ \min_{\sum_j \alpha_j = 1} \|F_k\alpha \|^2. $$
        \item Compute the accelerated update 
                \begin{equation}\label{eqn:aa}
                    x^{k+1} = \sum^k_{i=0} \alpha_i g(x^{k-m+i}).
                \end{equation}
    \end{enumerate}
    We will denote, for each iterate $\bu^k$, as $AA_m(\bu^k)$ the accelerated solution obtained with \eqref{eqn:aa}, where the fixed-point map $g$ is given by our solution map $G^{-1}\circ F$. 

\subsection{The adaptive solver} 

The steps in our adaptive solver are summarized in Algorithm~\ref{alg:amr}. The algorithm leverages non-conforming forest-of-octrees meshes; see, e.g., \cite{badia2020generic}. 
Forest-of-octrees meshes can be seen as a two-level decomposition of the computational domain (typically an square or cube in the case of \ac{dir}) referred to as macro and micro level, respectively. The macro level is a suitable {\em conforming} partition $\mathcal{C}_h$ of into quadrilateral ($d=2$) or hexahedral cells ($d=3$). This mesh, which may be generated using for instance an unstructured mesh generator,  is referred to as the coarse mesh. At the micro level, each of the cells of $\mathcal{C}_h$ becomes the root of an adaptive octree with cells that can be recursively and dynamically refined or coarsened using the so-called $1:2^d$ uniform partition rule. If a cell is marked for refinement, then it is split into $2^d$ children cells by subdividing all parent cell edges. If all children cells of a parent cell are marked for coarsening, then they are collapsed into the parent cell. The union of all leaf cells in this hierarchy forms the decomposition of the domain at the micro level.

\begin{algorithm}
    \caption{\ac{aa}-\ac{amr} solution strategy ($\alpha$, $m$, $N^0_\texttt{ref}$, $N_\texttt{ref}$, $\theta_\texttt{coarsen}$, $\theta_\texttt{refine}$)}\label{alg:amr}
    \begin{algorithmic}[1]
        \STATE{Refine a one-element mesh $N^0_\texttt{ref}$ times to get initial mesh $M^0$}
        \STATE{Initialise solution vector $\bu$ in $M^0$}
        \FOR{$k$ in  $1,\hdots, N_\texttt{ref}$}
            \STATE{Interpolate solution $\bu$ to current mesh $\bu^0 = \Pi_{M^{k-1}} \bu$}
            \WHILE{Solution not converged}
                \STATE{Set $\tilde{\bu}^{k+1}$ the solution of \eqref{eqn:problem-time} given $\bu^k$}
                \STATE{Set $\bu^{k+1} = AA_m(\tilde{\bu}^{k+1})$ the accelerated solution}
            \ENDWHILE
            \STATE{Set solution $\bu$ as the last iteration}
            \STATE{Compute error estimate \eqref{eqn:estimatorprim}}
            \STATE{Refine $\theta_\texttt{refine}$ fraction of elements with largest estimator in $M^{k-1}$}
            \STATE{Coarsen $\theta_\texttt{coarsen}$ fraction of elements with smallest estimator in $M^{k-1}$}
            \STATE{Set new mesh $M^k$ and adapt discrete spaces}
        \ENDFOR
        \RETURN $\bu$
    \end{algorithmic}
\end{algorithm}

One important consideration is that \ac{amr} with forest-of-octrees allows for coarsening only if there is an initial mesh hierarchy to be coarsened. Because of this, we consider for all problems an initial coarse mesh $\mathcal{C}_h$ with only one coarse element (where the images fit), and perform some initial uniform refinements that give us such initial hierarchy. This is relevant as images have in many cases a dark background where accuracy is not important, and thus we expect our error estimator to detect this and coarsen such areas. Because of these considerations, our proposed algorithm consists in the following steps: (i) build an initial mesh by uniformly refining a single-element coarse mesh, (ii) on each mesh solve \ac{dir} \eqref{eqn:problem} using scheme \eqref{eqn:problem-time}, (iii) after computing a solution for a given mesh, a $\theta_\text{refine}$ percentage of elements are refined and a $\theta_\text{coarsen}$ percentage of elements are coarsened, and (iv) stop after a given number of mesh adaptations has been performed. We stress that this approach resembles the octree-based approach from \cite[Sect. 3]{haber2007octree}, but with the following key differences: (i)~we use a \ac{fem} formulation (instead of finite differences), (ii)~we guide adaptivity using a theoretically derived residual-based error indicator, and (iii)~we combine \ac{amr} with \ac{aa} to speed-up the convergence of the pseudo-time formulation at each mesh level in the hierarchy. 

Given that our target problem is \eqref{eqn:problem}, the algorithm performns adaptivity only after the pseudo-time simulation based on \eqref{eqn:problem-time-imex} has reached a stationary state. We acknowledge that further optimisation can be obtained by performing adaptivity instead every fixed number of timesteps, but as the number of timesteps is highly unpredictable, we preferred not to pursuit this strategy. In addition, the use of \ac{aa} for the pseudo-time iterations would be otherwise incompatible with adaptivity due to the change of dimensions between different adaptivity steps. 

\section{Numerical tests}\label{section:numerical tests}

 In this section we present numerical examples to illustrate the performance of the proposed adaptive \ac{dir} solver.  The realisation of this solver is conducted using the tools provided by the open source scientific software packages in the \texttt{Gridap} ecosystem \cite{badia22,verdugo22}.
 The sparse linear systems were solved using UMFPACK, as provided by Julia. We used the \texttt{GridapP4est.jl}~\cite{GridapP4est2025} Julia package in order to handle forest-of-octrees meshes (including facet integration on non-conforming interfaces as per required by the computation of the a posteriori error estimator) and \ac{fe} space constraints. This package, built upon the \texttt{p4est} meshing engine~\cite{burstedde2011p4est}, is endowed with Morton space-filling curves, and it provides high-performance and low-memory footprint algorithms to handle forest-of-octrees.
 All numerical tests were performed on a supercomputer node equipped with Intel Xeon Platinum 8274 CPU cores, with Julia 1.10.4 and IEEE double precision. We used \texttt{-O3} as the optimisation  flag for the Julia compiler. 

\begin{figure}[t!]
    \centering
    \begin{subfigure}{0.45\textwidth}
        \centering
        \includegraphics[width=0.8\textwidth]{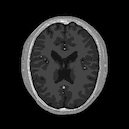}
        \subcaption{Brain reference}
    \end{subfigure}
    \begin{subfigure}{0.45\textwidth}
        \centering
        \includegraphics[width=0.8\textwidth]{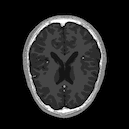}
        \subcaption{Brain target}
    \end{subfigure}

    \begin{subfigure}{0.45\textwidth}
        \centering
        \includegraphics[width=0.8\textwidth]{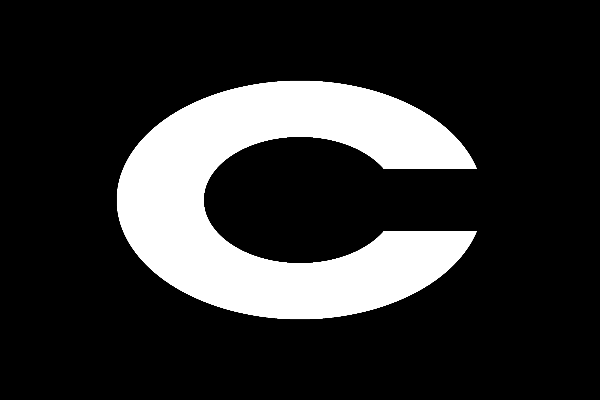}
        \subcaption{OC reference}
    \end{subfigure}
    \begin{subfigure}{0.45\textwidth}
        \centering
        \includegraphics[width=0.8\textwidth]{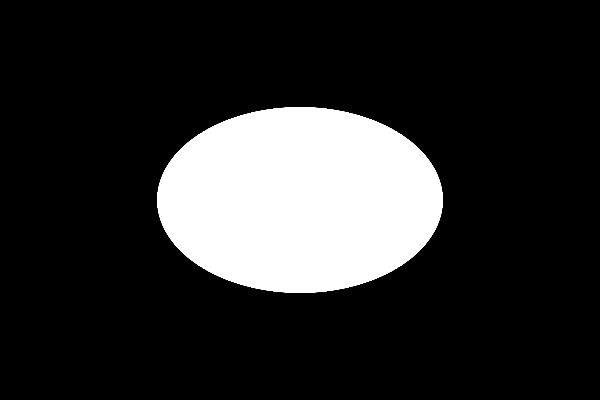}
        \subcaption{OC target}
    \end{subfigure}
    \caption{Reference (left) and target (right) images used in the brain (top) and OC (bottom) tests. }\label{fig:images}
\end{figure}

 For simplicity, we will test our algorithm in two brain images, obtained from the BrainWeb Database \cite{cocosco1997brainweb,collins1998design,kwan1996extensible,kwan1999mri} and both of $129\times 129$ pixels, and on a standard benchmark test known as the OC images, which we stored as $600\times 400$ pixels images. We show both image pairs in Figure~\ref{fig:images}. The brain test does not require large deformations to take place, but is nonetheless a challenging problem as brain images have regions with large contrasts, mainly near the cortex. The OC test is instead much more challenging, as the registration we look for require very large deformations and is guided only through the discrete gradient of a field that is fundamentally discontinuous. Unless otherwise stated, all images will be pre-processed with a Gaussian kernel using a variance parameter of $\sigma=1$ to avoid having discontinuous derivatives, and then interpolated globally using linear B-Splines \cite{de1978practical}, so that we can compute the gradient of the template image as required by the $\nabla T$ term. 

 A fundamental issue that we do not address in detail in this work is that whenever we use elements that have more than one image pixel inside, there could be additional numerical integration errors involved in our computations, in particular for those terms involving images. In preliminary studies we have observed this to have a non-negligible influence on the performance of the solvers, and thus we show a sensitivity analysis regarding this issue in Section~\ref{appendix:integration} using first order Lagrangian \acp{fe}. In the experiments, we chose a quadrature order of 6 for all integrals involving images (see the aforementioned section for a formal definition of quadrature order, and the particular kind of quadratures that we used in this work). We envision that the use of adaptive integration rules for $n$-cubes (such as, e.g., \cite{genz1980remarks,johnson2018algorithm}) for terms involving images may lead to a better trade-off among integration accuracy and computational effort (see also \cite{bull1995parallel}). Because of this issue, we have performed all of our tests using only first order elements, as higher order approximations would require further tuning the quadrature rules.

\subsection{Convergence verification against manufactured   solutions}

In order to confirm the accuracy of the proposed schemes we perform two simple tests of convergence with exact solutions. We consider the unit square domain $\Omega = (0,1)^2$, use synthetic images 
\[ R(\bx) = |\bx - (0.2,0.2)^{\tt t}|^2 , \quad T(\bx) = |\bx - (0.8,0.8)^{\tt t}|^2, \]
and use the following smooth and 
non-smooth displacement solution 
\[ 
\bu_{\mathrm{ex}} = \frac{1}{10}\begin{pmatrix}\bigl[-\sin(\pi x)+\frac{1}{\lambda}\cos(\pi x)\bigr]\sin(\pi y)+\frac{4}{\pi^2}\\
\bigl[-\cos(\pi x)+\frac{1}{\lambda}\sin(\pi x)\bigr]\cos( \pi y)\end{pmatrix}, \quad \text{and} \quad 
 \bu_{\mathrm{ex}} = \frac{r^\beta}{10}\begin{pmatrix} \cos(\beta \theta)\\ \sin(\beta\theta)
\end{pmatrix}, 
\]
respectively. 
For the non-smooth case 
$r = \sqrt{x^2+y^2}$, $\theta = \mathrm{atan2}(y,x)$ are the polar coordinates and $\beta = \frac23$ is a regularity index that yields $\bu \in \bH^{1+\beta}(\Omega)$, and therefore we expect (under uniform refinement) a suboptimal convergence of $O(h^\beta)$. We consider the parameters $\Delta t = \alpha = E =1$, $\nu = \frac14$, and $\beta = \frac23$. For the smooth case we take $\kappa =0.5$ and for the non-smooth case we consider the pure-traction boundary condition (setting the boundary stiffness parameter $\kappa=0$). Note that the exact  solutions above do not induce zero traction boundary conditions for planar elasticity, so we need to also manufacture an exact traction imposed weakly in the formulation. Similarly, the manufactured solution may have a component of rigid body motions and so we also include on the right-hand side a contribution taking into account this part of the kernel. Likewise, an additional contribution is required as manufactured load on the right-hand side of the momentum equation, as well as in the element contribution to the volume error estimator and in the edge/boundary contribution (since the manufactured normal stress is non-homogeneous). In particular, this gives an estimator of the form 
\begin{align*}
   \Theta_K^2 & \coloneqq h_K^2\|\frac{1}{\Delta t} \bu_h  - \boldsymbol{f}_{\mathrm{ex}} + \boldsymbol{g}_{\mathrm{ex}} - \alpha\boldsymbol{f}_{\bu_h}\! - \textbf{div}({\bf \mathcal{C}}\mathbf{e}(\bu_h))\|_{0,K}^2 \\
   &\quad +\!\!\!\sum_{e\in\mathcal{E}(K)\cap\mathcal{E}_h(\Omega)}\!\!\! h_e\|[({\bf \mathcal{C}}\mathbf{e}(\bu_h)-\bsigma_{\mathrm{ex}})\bn_e+\kappa(\bu_h-\bu_{\mathrm{ex}})]\|_{0,e}^2 \\&\quad 
       + \!\!\! \sum_{e\in\mathcal{E}(K)\cap\mathcal{E}_h(\Gamma)}\!\!\!h_e\|({\bf \mathcal{C}}\mathbf{e}(\bu_h)-\bsigma_{\mathrm{ex}})\bn_e+\kappa(\bu_h-\bu_{\mathrm{ex}})\|_{0,e}^2,  
\end{align*}
with $\boldsymbol{f}_{\mathrm{ex}} = - \mathbf{div}\,\bsigma_{\mathrm{ex}}$ and $\boldsymbol{g}_{\mathrm{ex}} = \alpha \{T(\bx + \bu_{\mathrm{ex}})-R\}\nabla T(\bx + \bu_{\mathrm{ex}})$.

The numerical results of the uniform refinement tests are reported in Table~\ref{table:convergence-smooth} and \ref{table:convergence-non-smooth} for the smooth and non-smooth solutions, respectively. 
We construct six levels of uniform mesh refinement of the domain, on which we compute approximate solutions and the associated errors in the norm $|\bv|_{1,\Omega} = \|\boldsymbol{\varepsilon}(\bv)\|_{0,\Omega}$. Convergence rates are calculated as usual: 
\[
\text{rate} =\log({e}/\widehat{{e}})[\log(h/\widehat{h})]^{-1}\,,\]
where ${e}$ and $\widehat{{e}}$ denote errors produced on two consecutive meshes of sizes $h$ and $\widehat{h}$, respectively. For the smooth case we see optimal convergence of order $O(h^{k+1})$ and bounded effectivity indexes independently of the refinement level. For the non-smooth case we observe the expected sub-optimal convergence under a uniform mesh refinement.

\begin{table}[htbp]
\setlength{\tabcolsep}{4.pt}
    \centering
    \begin{tabular}{r c | c c c}
        \toprule \acp{dof}   & $h$ & $|\bu-\bu_h|_{1,\Omega}$ & \texttt{rate} & $\texttt{eff}(\Theta)$ \\ \midrule
        \multicolumn{5}{c}{Uniform mesh refinement, with $k=1$}\\
          \midrule
     21 & 0.7071 & 6.48e-02 & $\star$ & 0.234 \\
     53 & 0.3536 & 3.29e-02 & 0.977 & 0.231 \\
    165 & 0.1768 & 1.66e-02 & 0.991 & 0.214 \\
    581 & 0.0884 & 8.30e-03 & 0.997 & 0.206 \\
   2181 & 0.0442 & 4.15e-03 & 0.999 & 0.202 \\
   8453 & 0.0221 & 2.08e-03 & 1.000 & 0.200 \\
 \midrule 
\multicolumn{5}{c}{Uniform mesh refinement, with $k=2$}\\
          \midrule
     53 & 0.7071 & 1.32e-02 & $\star$ & 0.062 \\
    165 & 0.3536 & 3.34e-03 & 1.985 & 0.063 \\
    581 & 0.1768 & 8.39e-04 & 1.995 & 0.063 \\
   2181 & 0.0884 & 2.10e-04 & 1.996 & 0.064 \\
   8453 & 0.0442 & 5.33e-05 & 1.979 & 0.065 \\
  33285 & 0.0221 & 1.51e-05 & 1.946 & 0.077\\
     \bottomrule
    \end{tabular}
    \caption{Convergence tests against smooth manufactured solutions. Error history of the method for two polynomial degrees and effectivity index associated with the \emph{a posteriori} error estimator on uniform mesh refinement.}
    \label{table:convergence-smooth}
\end{table}

On the other hand, the numerical results for the adaptive mesh refinement case are reported in Table \ref{table:convergence-non-smooth}. We take (for the two polynomial degrees tested here) the same refinement fraction of 15\%. 
For the experimental convergence rates of the adaptive case we use the alternative form 
\[\text{rate} = -2\log(e/\widehat{e})[\log(\text{\acp{dof}}/\widehat{\text{ \acp{dof}}})]^{-1}.\]
One can see in Table \ref{table:convergence-non-smooth} that the optimal convergence is attained under adaptive mesh refinement guided by the \emph{a posteriori} error estimator. Again, the effectivity index remains bounded in all cases. Also, we can readily see that the same level of energy error is reached with the adaptive case using roughly 10\% of the number of \acp{dof} required in the uniform mesh refinement. This is consistent in both first and second order schemes. 

\begin{table}[htbp]
\setlength{\tabcolsep}{3.5pt}
\begin{minipage}{0.495\textwidth}
\centering    \begin{tabular}{r c | c c c}
    \toprule \acp{dof}   & $h$ & $|\bu-\bu_h|_{1,\Omega}$ & \texttt{rate} & $\texttt{eff}(\Theta)$ \\ \midrule
        \multicolumn{5}{c}{Uniform mesh refinement, with $k=1$}\\
          \midrule
           21 & 0.7071 & 1.30e-02 & $\star$ & 0.178 \\
     53 & 0.3536 & 8.60e-03 & 0.599 & 0.215 \\
    165 & 0.1768 & 5.61e-03 & 0.616 & 0.228 \\
    581 & 0.0884 & 3.63e-03 & 0.628 & 0.233 \\
   2181 & 0.0442 & 2.34e-03 & 0.634 & 0.237 \\
   8453 & 0.0221 & 1.50e-03 & 0.638 & 0.240\\ 
     33285 & 0.0110 & 9.65e-04 & 0.640 & 0.244\\
     \midrule 
     \multicolumn{5}{c}{Adaptive mesh refinement, with $k=1$}\\
          \midrule
    53 & 0.7071 & 8.60e-03 & $\star$ & 0.215 \\
     71 & 0.3536 & 6.20e-03 & 2.240 & 0.229\\
     97 & 0.1768 & 4.65e-03 & 1.848 & 0.234\\
    137 & 0.0884 & 3.64e-03 & 1.408 & 0.237\\
    187 & 0.0442 & 2.87e-03 & 1.530 & 0.242\\
    259 & 0.0221 & 2.37e-03 & 1.180 & 0.239\\
    367 & 0.0110 & 1.88e-03 & 1.334 & 0.227\\
    511 & 0.0055 & 1.55e-03 & 1.156 & 0.221\\
    \bottomrule 
\end{tabular}\end{minipage}\begin{minipage}{0.495\textwidth}
    \centering    \begin{tabular}{r c | c c c}
        \toprule \acp{dof}   & $h$ & $|\bu-\bu_h|_{1,\Omega}$ & \texttt{rate} & $\texttt{eff}(\Theta)$ \\ \midrule
\multicolumn{5}{c}{Uniform mesh refinement, with $k=2$}\\
          \midrule
          53 & 0.7071 & 6.09e-03 & $\star$ & 0.111\\
    165 & 0.3536 & 4.13e-03 & 0.558 & 0.184\\
    581 & 0.1768 & 2.80e-03 & 0.563 & 0.246\\
   2181 & 0.0884 & 1.88e-03 & 0.571 & 0.284\\
   8453 & 0.0442 & 1.26e-03 & 0.580 & 0.308\\
  33285 & 0.0221 & 8.39e-04 & 0.588 & 0.327\\
 132101 & 0.0110 & 5.56e-04 & 0.594 & 0.345\\
 \midrule 
\multicolumn{5}{c}{Adaptive mesh refinement, with $k=2$}\\
          \midrule
     165 & 0.7071 & 4.13e-03 & $\star$ & 0.184\\
    231 & 0.3536 & 2.83e-03 & 2.253 & 0.235\\
    327 & 0.1768 & 1.96e-03 & 2.100 & 0.259\\
    477 & 0.0884 & 1.37e-03 & 1.927 & 0.267\\
    699 & 0.0442 & 9.34e-04 & 1.987 & 0.269\\
    963 & 0.0221 & 6.53e-04 & 2.231 & 0.293\\
   1373 & 0.0110 & 4.32e-04 & 2.335 & 0.294\\
   1981 & 0.0055 & 2.77e-04 & 2.423 & 0.284\\
     \bottomrule
    \end{tabular}\end{minipage}
    \caption{Convergence tests. Error history of the method for two polynomial degrees, using a non-smooth manufactured solution, and effectivity index associated with the \emph{a posteriori} error estimator on uniform and adaptive mesh refinement.}
    \label{table:convergence-non-smooth}
\end{table}

To exemplify the performance of the method in the non-smooth solution regime, we plot in Figure \ref{fig:convergence} the approximate solution (displacement magnitude warped) in a coarse adapted mesh, the synthetic images (in the undeformed mesh), as well as depictions of the adapted meshes after several steps of refinement, which indicate the expected agglomeration of elements near the origin (where the singularity is). 

\begin{figure}[htbp]
    \centering
    \includegraphics[width=0.324\linewidth]{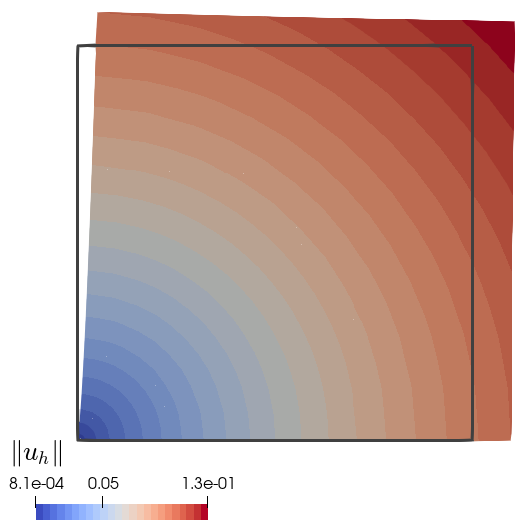}
    \includegraphics[width=0.324\linewidth]{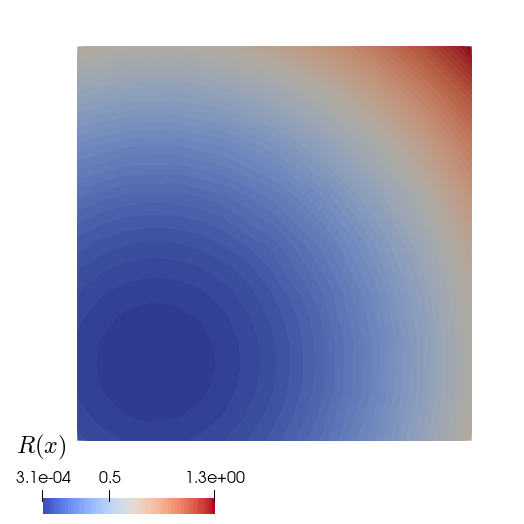}
    \includegraphics[width=0.324\linewidth]{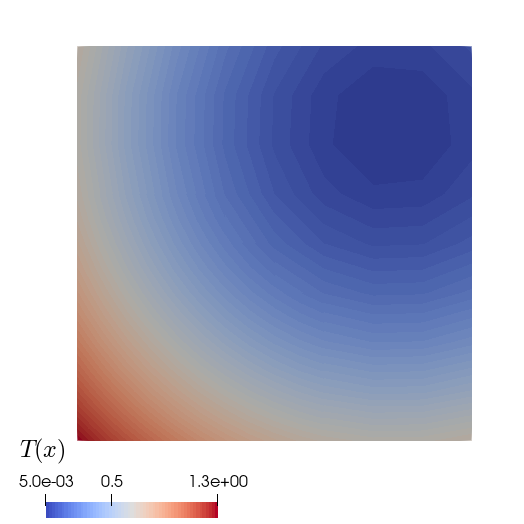}\\
    \includegraphics[width=0.324\linewidth]{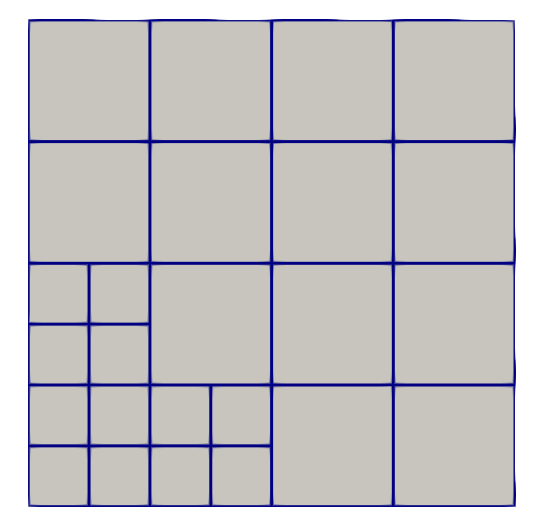}
    \includegraphics[width=0.324\linewidth]{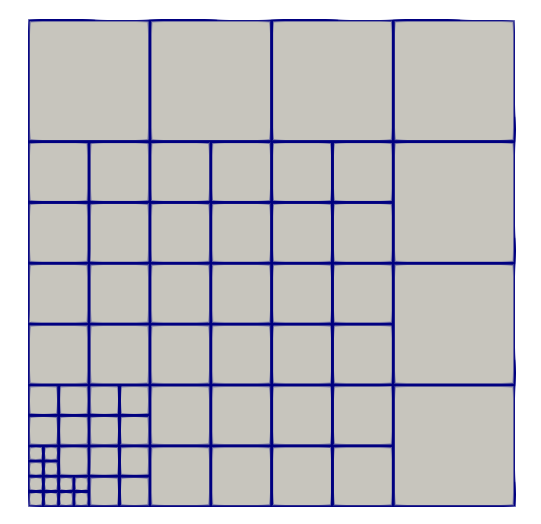}
    \includegraphics[width=0.324\linewidth]{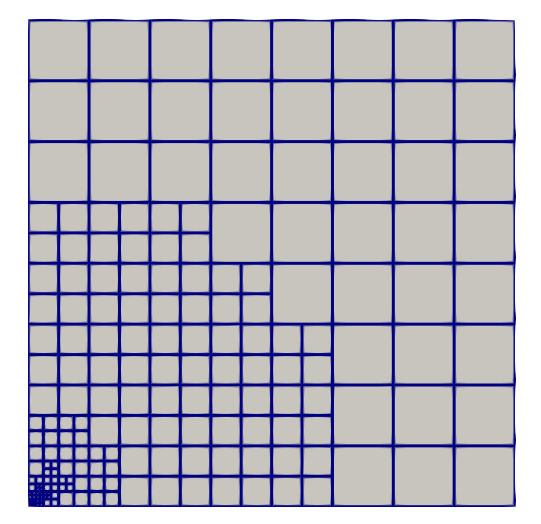}
    \caption{Convergence test  against a singular solution. Approximate displacement magnitude (warped), synthetic reference and target images all on the final adapted mesh (top row), and sample of adaptively refined meshes guided by the \emph{a posteriori} error estimator $\Theta$ after 3,5,7 refinement steps (bottom row).}
    \label{fig:convergence}
\end{figure}

\subsection{Anderson acceleration for \ac{dir}}
In this section we study the performance of the time acceleration method through \ac{aa} described in Section~\ref{section:aa}. We do this for both the brain and OC images, to evaluate the technique under small and large deformations.

We show the similarity measure, total number of iterations and elapsed time for the brain images registration in Table~\ref{table:aa-brain}. For this test, we used $\alpha=10^4$ and $\Delta t=10^{-5}$. The mesh resolution was set such that there is a single pixel image per each cell. To avoid ambiguities regarding convergence, we used the relative Euclidean vector norm of the stationary residual as a convergence criterion, with a tolerance of $10^{-4}$. 
We highlight that better results were obtained using an $L^2$ stabilisation in the pseudo-time terms from \eqref{eqn:problem-time}, i.e. $\mathcal L=I$ the identity operator. In this case, \ac{aa} is extremely convenient, as it provides a significant reduction in the number of iterations that increases as the depth parameter $m$ increases, except for the case $m=5$, which is still more convenient than the non-accelerated strategy. We registered the results until a wider depth was not convenient anymore, as iterations started increasing again, which typically happens because the conditioning of the related least squares problem starts deteriorating. For the largest depth parameter, using \ac{aa} can yield time accelerations up to a factor of 22 with respect to a non-accelerated approach. 

\begin{table}[htbp]
    \centering
    \begin{tabular}{l | r r r }
        \toprule Scheme  & Similarity & Iterations & elapsed time (s) \\ \midrule
        No accel & 0.0412 & 2,289      &   3,520.81     \\
        AA(2)    & 0.0412 & 382        &   651.43      \\
        AA(5)    & 0.0412 & 581        &   997.12      \\
        AA(10)   & 0.0412 & 203        &   332.04      \\ 
        AA(20)   & 0.0412 & 99         &   157.61      \\ \bottomrule
    \end{tabular}
    \caption{Acceleration test, brain images: Similarity measure, iteration count and elapsed time required for convergence for varying \ac{aa} depth, given by the parameter $m$.}
    \label{table:aa-brain}
\end{table}

For the OC images test we consider two cases for the values of $\alpha$ given by $10^4$ and $10^5$, and as the images have a sharp discontinuity, we considered $\sigma=4$. We set the size of the pseudo time-step to $\Delta t=10^{-2}$ and  $\Delta t=10^{-3}$, respectively. The mesh resolution was set in order to have $4 \times 4$ image pixels  per each cell. As with the brain, we used the relative Euclidean vector norm of the stationary residual as a convergence criterion for this test, but with a tolerance of $10^{-2}$. For this problem, solutions are obtained using $\mathcal L=I - \Delta$ from \eqref{eqn:problem-time}, as using only the identity yielded diverging iterations in our preliminary tests. This benchmark requires far larger deformations to obtain a satisfactory registration, which we depict by showing the warped target images next to the reference image for both values of $\alpha$ in Figure~\ref{fig:oc-small-large}. For the smaller $\alpha$ case, we show the results in Table~\ref{table:aa-oc-small}, where the results obtained are very similar to those of the brain. Acceleration is very convenient as it drastically reduces the number of iterations with minimal overhead per iteration. This yields elapsed time reductions of up to a factor of 3.7. In spite of this, we note that, as shown in Table~\ref{table:aa-oc-large}, acceleration is ineffective when larger displacements are involved, so that all accelerated iterations achieved the maximum number of iterations allowed ($10,000$).

\begin{figure}[htbp]
    \centering
    \newcommand{\wid}{0.32\textwidth}
    \begin{subfigure}{\wid}
        \centering
        \includegraphics[width=\textwidth]{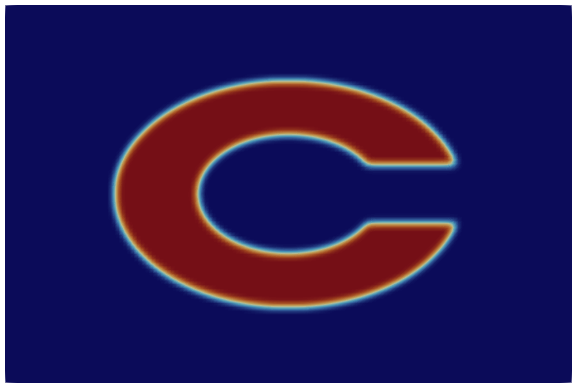}
        \caption{Reference}
    \end{subfigure}
    \begin{subfigure}{\wid}
        \centering
        \includegraphics[width=\textwidth]{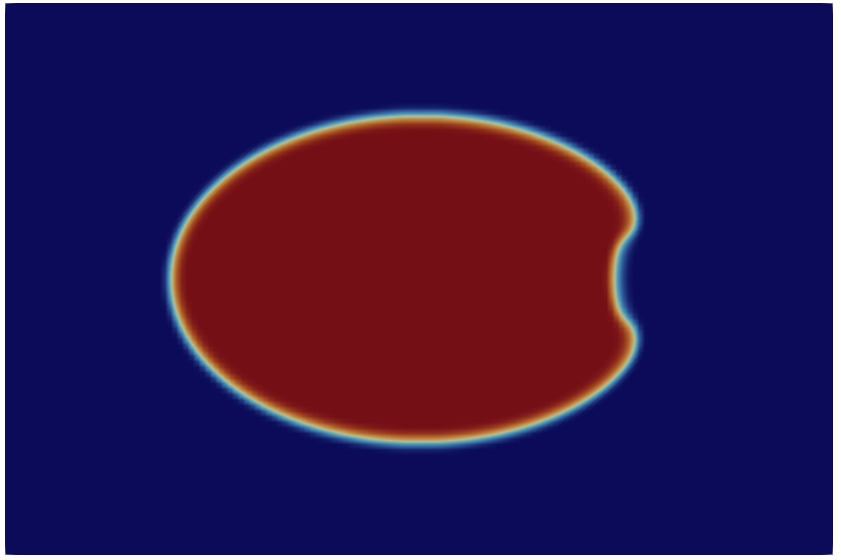}
        \caption{Small $\alpha$}
    \end{subfigure}
    \begin{subfigure}{\wid}
        \centering
        \includegraphics[width=\textwidth]{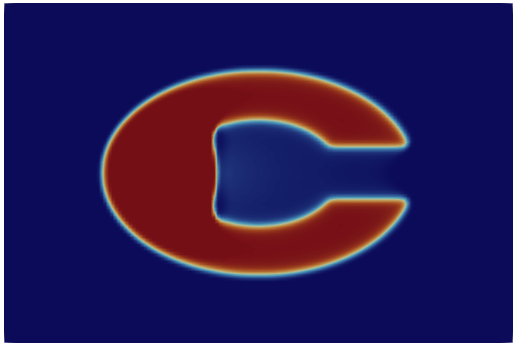}
        \caption{Large $\alpha$}
    \end{subfigure}
    \caption{OC test solutions for small and large similarity parameter ($10^4$ and $10^5$ respectively).}
    \label{fig:oc-small-large}
\end{figure}

\begin{table}[htbp]
\setlength{\tabcolsep}{4.pt}
    \centering
    \begin{subfigure}{0.49\textwidth}
        \centering
        \begin{tabular}{l | r r r }
            \toprule 
            Scheme   & Similarity & Iterations & time (s) \\ \midrule
            No accel & 0.263 & 113        &      78.43  \\
            AA(2)    & 0.263 & 39        &      28.58   \\
            AA(5)    & 0.263 & 37         &     27.03    \\
            AA(10)   & 0.264 & 28         &    21.02     \\
            AA(20)   & 0.264 & 28         &    21.93     \\ \bottomrule
        \end{tabular}
        \caption{Small $\alpha$}
        \label{table:aa-oc-small}
    \end{subfigure}
    \begin{subfigure}{0.49\textwidth}
        \centering
        \begin{tabular}{l | r r r }
            \toprule 
            Scheme   & Similarity & Iterations &  time (s) \\ \midrule
            No accel & 0.0846 & 8,927       & 6,123      \\
            AA(2)    & -- & --         & --           \\
            AA(5)    & --  & --         & --           \\
            AA(10)   & -- & --         & --           \\
            AA(20)   & -- & --         & --           \\ \bottomrule
        \end{tabular}
        \caption{Large $\alpha$}
        \label{table:aa-oc-large}
    \end{subfigure}
    \caption{Acceleration test, OC images: Similarity measure, iteration count and elapsed time required for convergence for varying \ac{aa} depth, given by the parameter $m$.}
\end{table}

\subsection{Adaptivity performance on brain images}
Starting from a mesh of one element, we considered 4 initial uniform refinements, and perform 5 adaptive mesh refinements with refine and coarse parameters given by $\theta_\texttt{refinement}= 0.4$ and $\theta_\texttt{coarsen} = 0.2$ respectively. We considered a tolerance of $10^{-4}$ for the velocity $\frac{\bu^{k+1} - \bu^k}{\Delta t}$ in the reference (non-adapted) case, and a tolerance of $10^{-2}$ for all the adaptive cases. We performed all tests using \ac{aa} with a depth parameter of $m=10$. We have seen that considering an equally precise solution in all levels of the adaptive solver leads to over-solving, and thus the resulting scheme that we consider resembles an inexact-Newton procedure.  To obtain a solution with better similarity than the one in Table~\ref{table:aa-brain} we used $\alpha=10^5$, and set $\Delta t=10^{-6}$ as larger timesteps resulted in non-convergence (in the form of oscillating iterations).

\begin{figure}[htbp]
    \newcommand{\wid}{0.16\textwidth}
    \centering
    \begin{subfigure}{\wid}
        \centering
        \includegraphics[width=\textwidth]{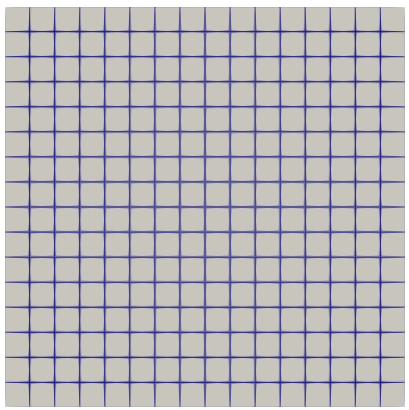}
    \end{subfigure}
    \begin{subfigure}{\wid}
        \centering
        \includegraphics[width=\textwidth]{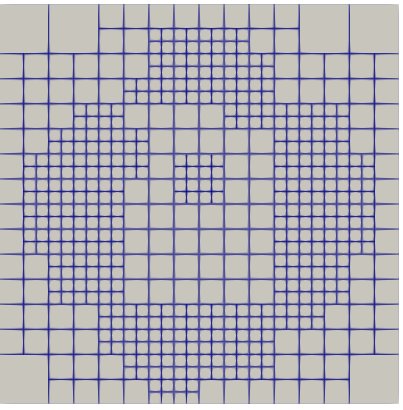}
    \end{subfigure}
    \begin{subfigure}{\wid}
        \centering
        \includegraphics[width=\textwidth]{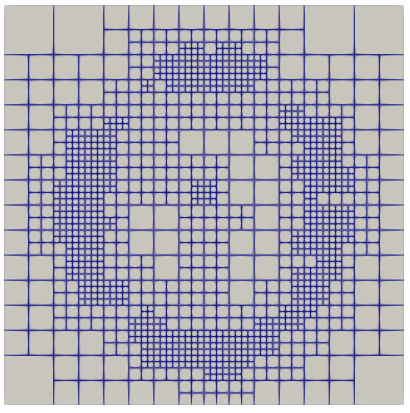}
    \end{subfigure}
    \begin{subfigure}{\wid}
        \centering
        \includegraphics[width=\textwidth]{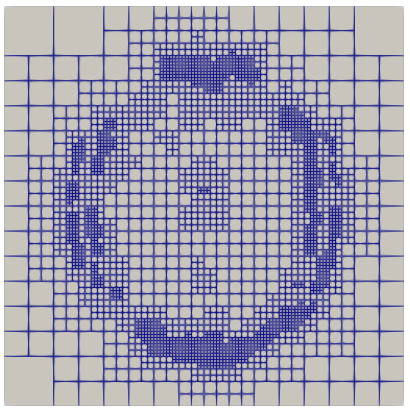}
    \end{subfigure}
    \begin{subfigure}{\wid}
        \centering
        \includegraphics[width=\textwidth]{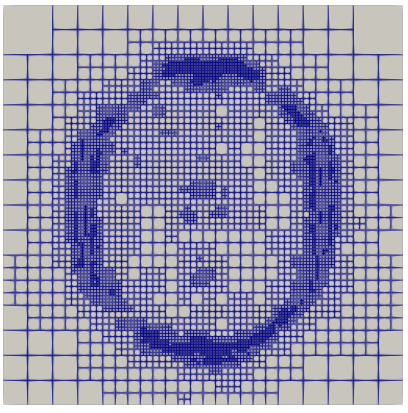}
    \end{subfigure}
    \begin{subfigure}{\wid}
        \centering
        \includegraphics[width=\textwidth]{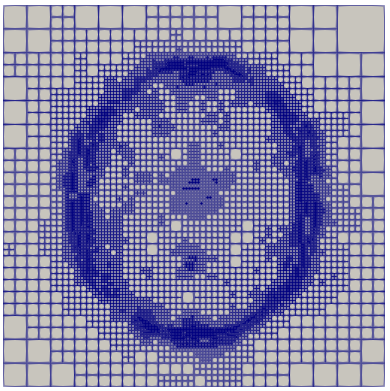}
    \end{subfigure}

    \begin{subfigure}{\wid}
        \centering
        \includegraphics[width=\textwidth]{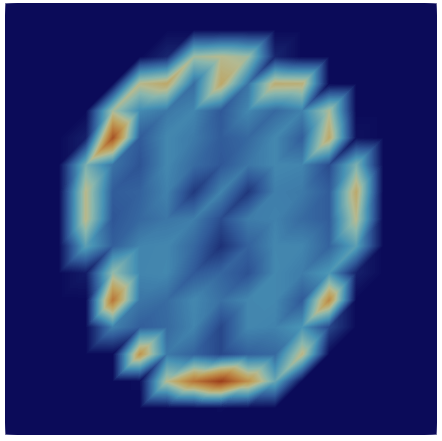}
    \end{subfigure}
    \begin{subfigure}{\wid}
        \centering
        \includegraphics[width=\textwidth]{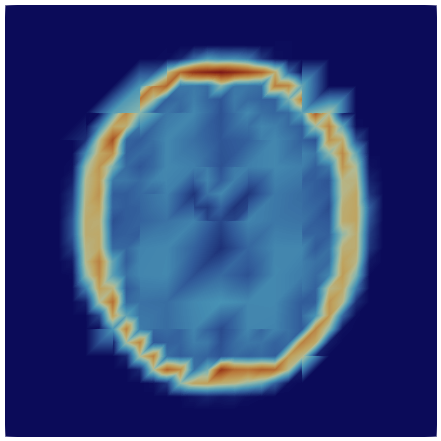}
    \end{subfigure}
    \begin{subfigure}{\wid}
        \centering
        \includegraphics[width=\textwidth]{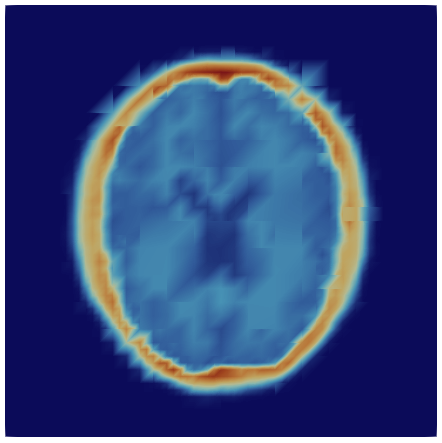}
    \end{subfigure}
    \begin{subfigure}{\wid}
        \centering
        \includegraphics[width=\textwidth]{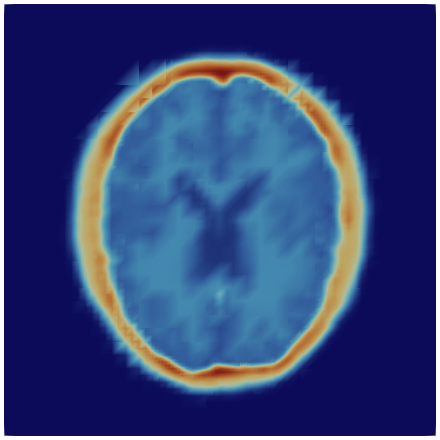}
    \end{subfigure}
    \begin{subfigure}{\wid}
        \centering
        \includegraphics[width=\textwidth]{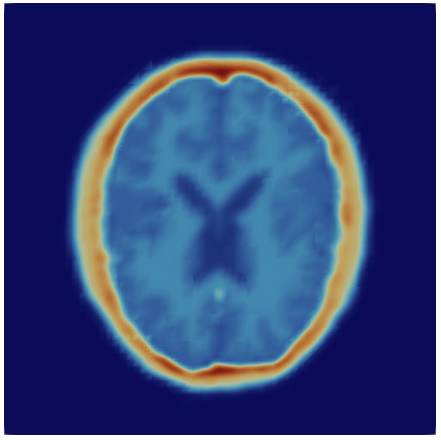}
    \end{subfigure}
    \begin{subfigure}{\wid}
        \centering
        \includegraphics[width=\textwidth]{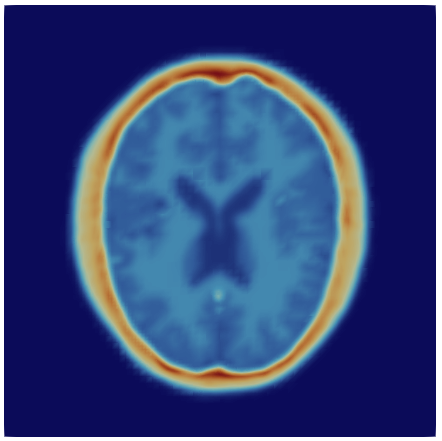}
    \end{subfigure}
    \caption{Brain \ac{amr} test: Evolution of \ac{amr} solution through all adaptive steps from left to right. In the top row we show the evolution of the adapted grid, and on the bottom row we show the evolution of the solution.}
    \label{fig:brain-amr-solution}
\end{figure}

In Figure~\ref{fig:brain-amr-solution} we show the evolution of the meshes obtained from our algorithm, and compare the solution obtained with a reference mesh of one-element-per-pixel in Figure~\ref{fig:brain-amr-comparison}. The solutions are indistinguishable to the eye, so we provide further comparison information in Table~\ref{table:brain-amr}, where we show the final number of \acp{dof}, similarity, total iterations, and required elapsed time. While the AMR solution requires a larger total number of iterations, most of these are performed on coarser meshes where iterations are much faster. This results in the overall solution time being accelerated by a factor of 13.62. This faster solution yielded a mildly lower similarity measure, and this was additionally achieved with less \acp{dof}. We stress that this test does not converge with the parameter values at hand if \ac{aa} is not used.

\begin{figure}[htbp]
    \newcommand{\wid}{0.3\textwidth}
    \centering
    \begin{subfigure}{\wid}
        \centering
        \includegraphics[width=\textwidth]{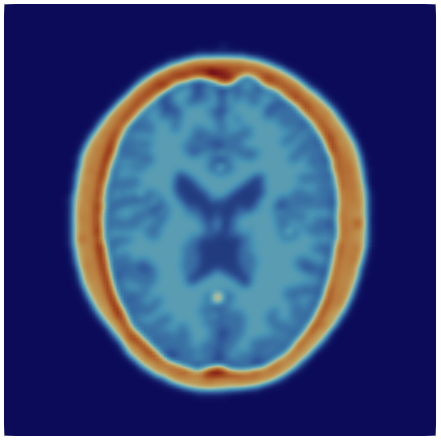}
        \caption{$R$}
    \end{subfigure}
    \begin{subfigure}{\wid}
        \centering
        \includegraphics[width=\textwidth]{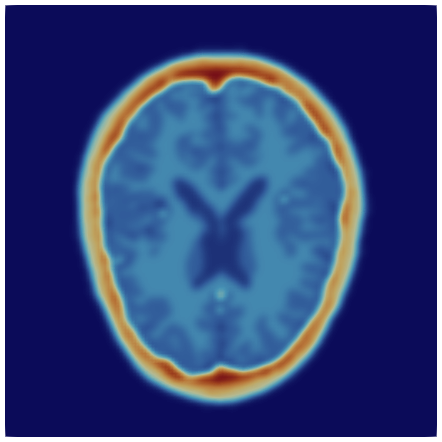}
        \caption{$T$}
    \end{subfigure}

    \begin{subfigure}{\wid}
        \centering
        \includegraphics[width=\textwidth]{brain-adaptive-5-sol.png}
        \caption{$T\circ(I + \vec u)$ \ac{amr}}
    \end{subfigure}
    \begin{subfigure}{\wid}
        \centering
        \includegraphics[width=\textwidth]{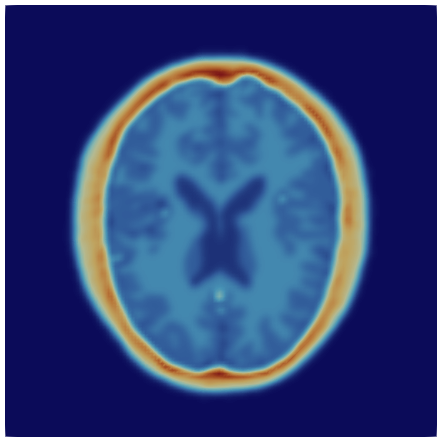}
        \caption{$T\circ(I + \vec u)$ classic}
    \end{subfigure}
    \caption{Brain \ac{amr} test: Comparison of \ac{amr} and classic solutions (bottom left and bottom right respectively) in comparison to original images (top row).}
    \label{fig:brain-amr-comparison}
\end{figure}

\begin{table}[htbp]
    \centering
    \begin{tabular}{r | r r r r}
        \toprule Strategy & \acp{dof} & Similarity & Iterations         & elapsed time (seconds) \\ \midrule
            Classic & 33,803 & 0.0263      & 927            & 1,649.94 \\
            Adaptive & 24,843 & 0.0260     &  \textcolor{gray}{{(\footnotesize 616, 321, 76, 43, 27, 24)}} 1,107 & 121.14
 \\ \bottomrule
    \end{tabular}
    \caption{Brain \ac{amr} test: Solution metrics, given by (a) total Degrees of Freedom (DoFs), (b) final similarity, (c) iterations, where we display the iterations incurred by the adaptive solver in each level, and (d) the overall elapsed time.}
    \label{table:brain-amr}
\end{table}

\subsection{Adaptivity performance on OC images}

The setup of the experiments for the OC images in this section is almost equivalent to that of the brain images, with the following differences.  First, we considered a tolerance of $10^{-2}$ for the residual in the reference (non-adapted) case, and a tolerance of $10^{-1}$ for all the adaptive cases. Second, we used $\alpha=10^8$ to obtain a more accurate solution than the one in Table~\ref{table:aa-oc-small} and \ref{table:aa-oc-large},  and set $\Delta t=10^{-6}$, which yields convergence (without \ac{aa}). Third, we did not use \ac{aa}, as it did not yield to a convergent fixed-point iteration scheme for the values of $\alpha$ and $\Delta t$ at hand.

In Figure~\ref{fig:oc-amr-solution} we show the evolution of the meshes obtained from our algorithm, and compare the solution obtained with a reference mesh of four-pixels-per-element in Figure~\ref{fig:oc-amr-comparison}. The solutions are almost indistinguishable to the eye, so we provide further comparison information in Table~\ref{table:oc-amr} (see description of Table~\ref{table:brain-amr} above for a description of the different fields in the table). We highlight that the overall solution time was accelerated by a factor of 2.09. This faster solution yielded a lower similarity measure, and this was achieved with less \acp{dof}.

\begin{figure}[htbp]
    \newcommand{\wid}{0.16\textwidth}
    \centering
    \begin{subfigure}{\wid}
        \centering
        \includegraphics[width=\textwidth]{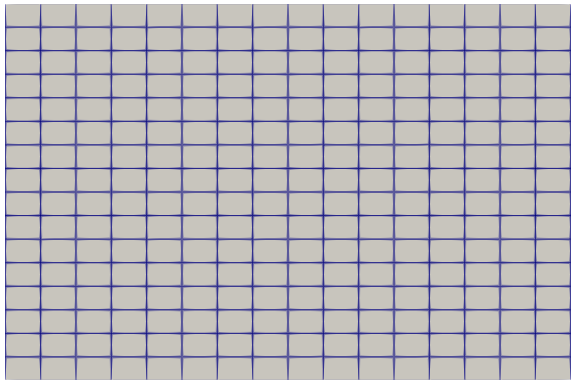}
    \end{subfigure}
    \begin{subfigure}{\wid}
        \centering
        \includegraphics[width=\textwidth]{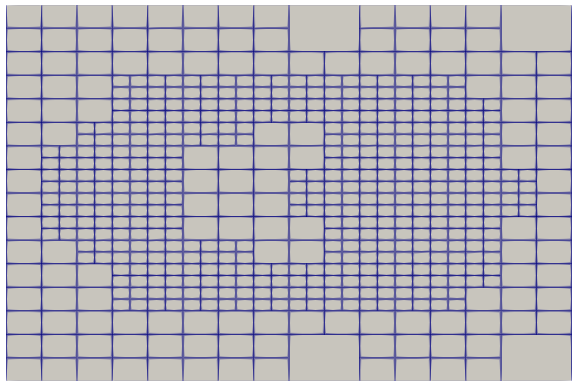}
    \end{subfigure}
    \begin{subfigure}{\wid}
        \centering
        \includegraphics[width=\textwidth]{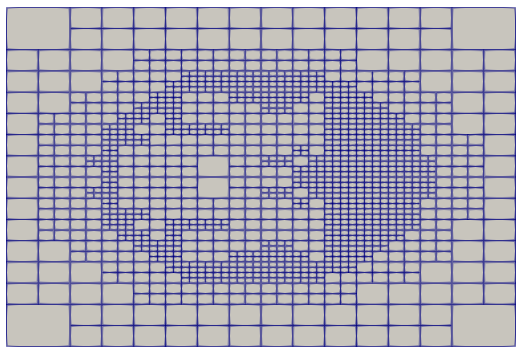}
    \end{subfigure}
    \begin{subfigure}{\wid}
        \centering
        \includegraphics[width=\textwidth]{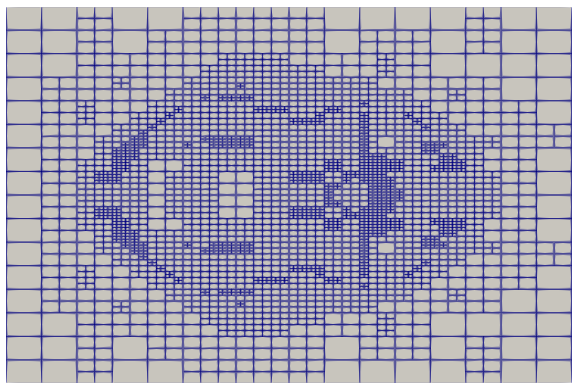}
    \end{subfigure}
    \begin{subfigure}{\wid}
        \centering
        \includegraphics[width=\textwidth]{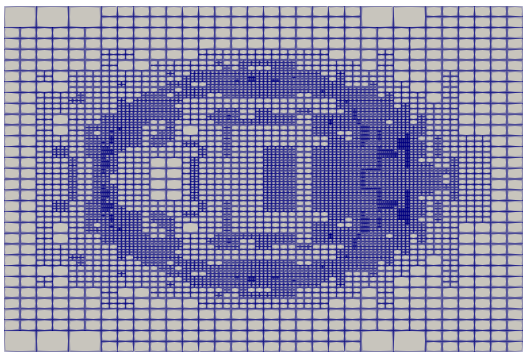}
    \end{subfigure}
    \begin{subfigure}{\wid}
        \centering
        \includegraphics[width=\textwidth]{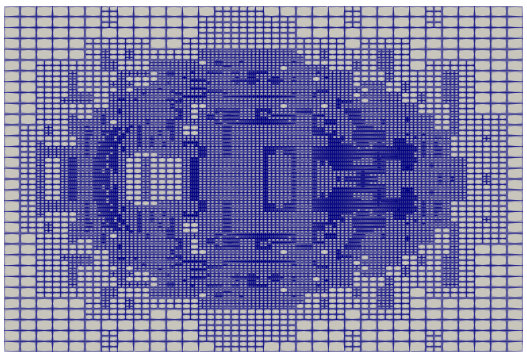}
    \end{subfigure}

    \begin{subfigure}{\wid}
        \centering
        \includegraphics[width=\textwidth]{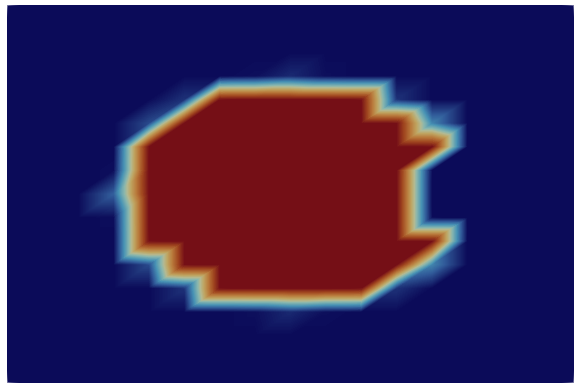}
    \end{subfigure}
    \begin{subfigure}{\wid}
        \centering
        \includegraphics[width=\textwidth]{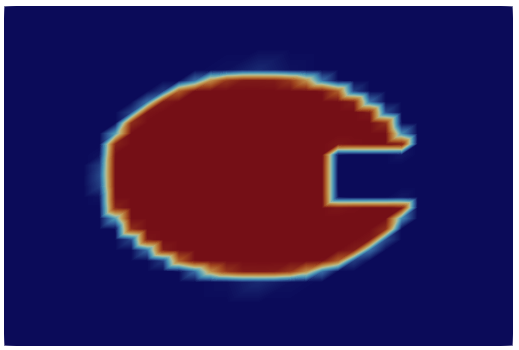}
    \end{subfigure}
    \begin{subfigure}{\wid}
        \centering
        \includegraphics[width=\textwidth]{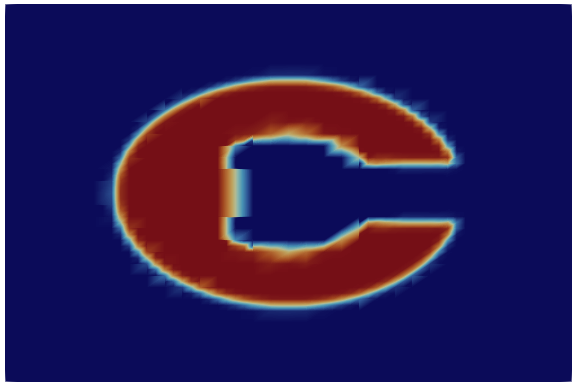}
    \end{subfigure}
    \begin{subfigure}{\wid}
        \centering
        \includegraphics[width=\textwidth]{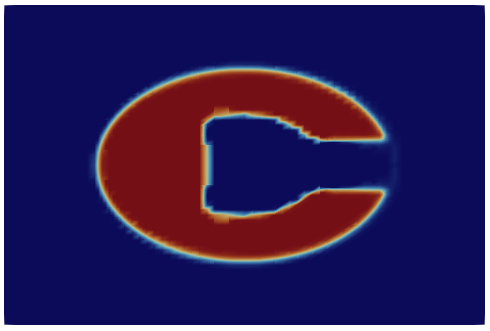}
    \end{subfigure}
    \begin{subfigure}{\wid}
        \centering
        \includegraphics[width=\textwidth]{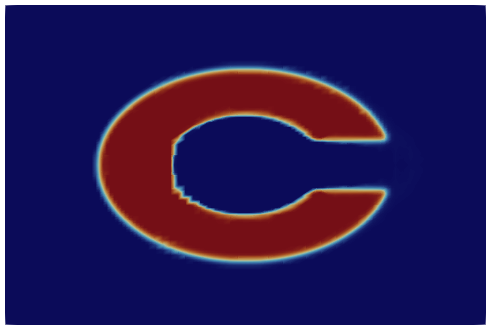}
    \end{subfigure}
    \begin{subfigure}{\wid}
        \centering
        \includegraphics[width=\textwidth]{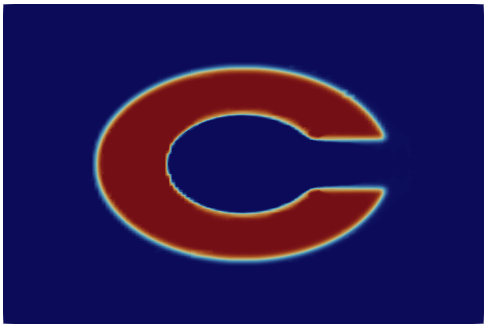}
    \end{subfigure}
    \caption{OC \ac{amr} test: Evolution of \ac{amr} solution through all adaptive steps from left to right. In the top row we show the evolution of the adapted grid, and on the bottom row we show the evolution of the solution.}
    \label{fig:oc-amr-solution}
\end{figure}

\begin{figure}[htbp]
    \newcommand{\wid}{0.3\textwidth}
    \centering
    \begin{subfigure}{\wid}
        \centering
        \includegraphics[width=\textwidth]{OC-R.png}
        \caption{$R$}
    \end{subfigure}
    \begin{subfigure}{\wid}
        \centering
        \includegraphics[width=\textwidth]{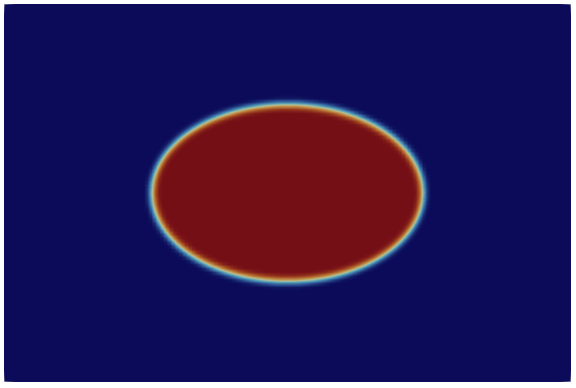}
        \caption{$T$}
    \end{subfigure}

    \begin{subfigure}{\wid}
        \centering
        \includegraphics[width=\textwidth]{OC-adaptive-5-sol.png}
        \caption{$T\circ(I + \vec u)$ \ac{amr}}
    \end{subfigure}
    \begin{subfigure}{\wid}
        \centering
        \includegraphics[width=\textwidth]{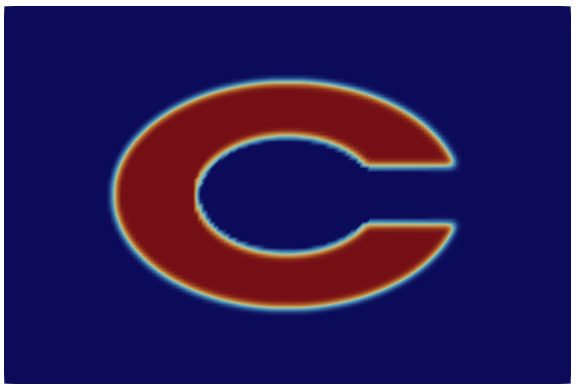}
        \caption{$T\circ(I + \vec u)$ classic}
    \end{subfigure}
    \caption{OC \ac{amr} test: Comparison of \ac{amr} and classic solutions (bottom left and bottom right respectively) in comparison to original images (top row).}
    \label{fig:oc-amr-comparison}
\end{figure}

\begin{table}[htbp]
\setlength{\tabcolsep}{4.pt}
    \centering
    \begin{tabular}{r | r r r r}
        \toprule Strategy & \acp{dof}   & Similarity & Iterations         & elapsed time (seconds) \\ \midrule
                Classic   & 30,505 & 0.0196     & 1,478               & 1,003.88 \\
                Adaptive   & 23,953 & 0.0156     & \textcolor{gray}{{(\footnotesize 21, 124, 549, 217, 662, 295)}} 1,868  & 478.57
 \\ \bottomrule
    \end{tabular}
    \caption{OC \ac{amr} test: Solution metrics, given by (a) total Degrees of Freedom (DoFs), (b) final similarity, (c) iterations, where we display the iterations incurred by the adaptive solver in each level in gray, and (d) the overall elapsed time.}
    \label{table:oc-amr}
\end{table}

\subsection{Quadrature sensitivity}\label{appendix:integration}

Image functions are highly nonlinear, which makes the computation of integrals that depend on them highly prone to numerical integration (quadrature) errors. It is difficult to obtain analytic results that can give sharp estimates for this, so instead we tried the following approach: depending of the pixels-per-element that one can have, we compute the quadrature order (i.e. maximum polynomial degree that is integrated exactly \cite{ern2004theory}) that provides minimal variations of the residual. Naturally, one would expect that the larger the number of pixels-per-element, the larger the required quadrature order to commit approximately the same amount of numerical integration error. To test this, we considered as the ground truth the residual vector 
    $$ [W(q)]_i = \int_\Omega (T(\bx) - R(\bx))\nabla T(\bx) \cdot \bv_i\, {\dx}_q,$$
where $\bv_i$ is a basis function of the considered \ac{fe} space, and we denote with ${\dx}_q$ the numerical integration performed with a quadrature of order $q$. With it, the error will be denoted with
    $$ e(q) = \frac{| W(q) - W(q_\text{truth})|}{|W(q_\text{truth})|}. $$

\begin{figure}[t!]
    \centering
    \newcommand{\wid}{0.495\textwidth}
    \begin{subfigure}{\wid}
        \centering
        \includegraphics[width=\textwidth]{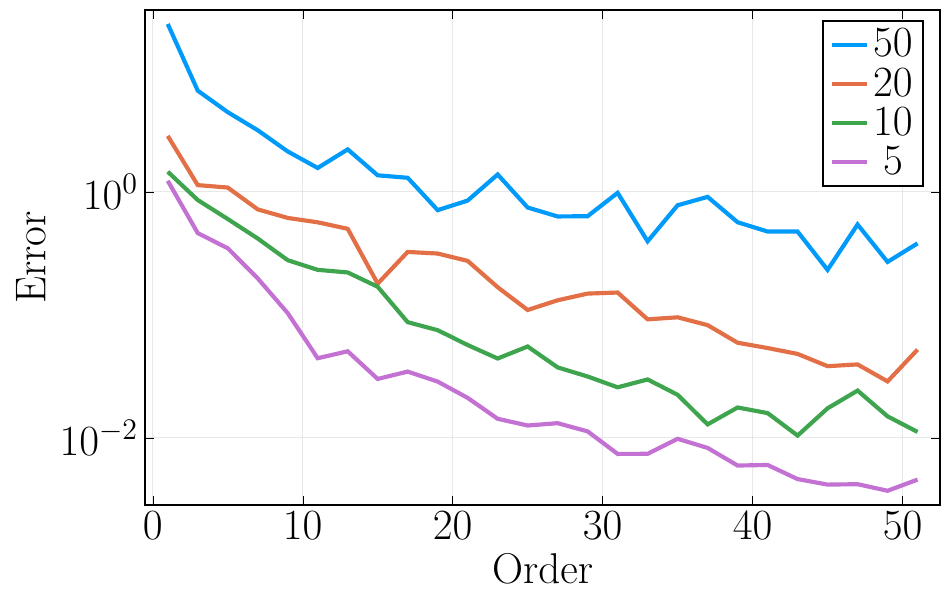}
        \caption{$\sigma=0$}
    \end{subfigure}
    \begin{subfigure}{\wid}
        \centering
        \includegraphics[width=\textwidth]{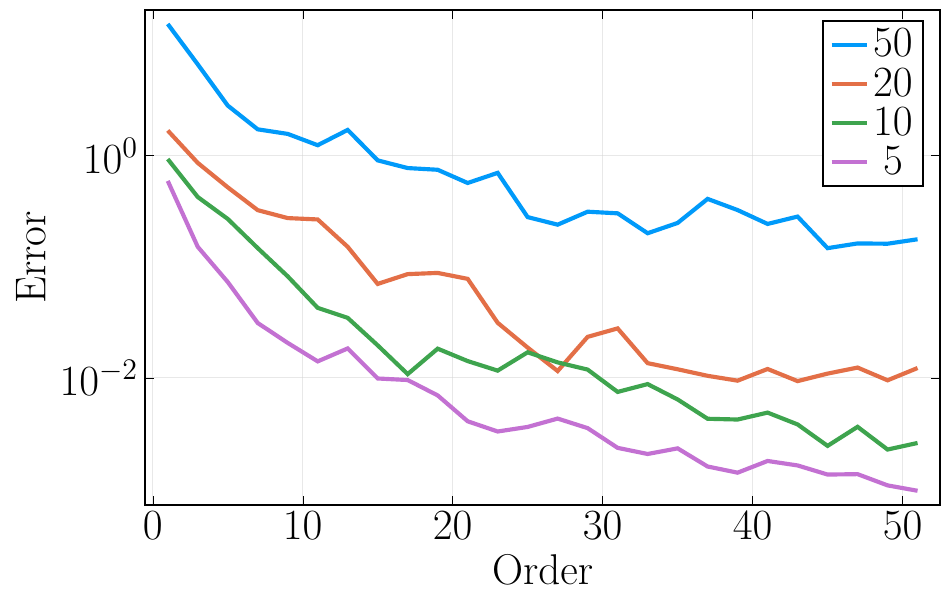}
        \caption{$\sigma=1$}
    \end{subfigure}

    \begin{subfigure}{\wid}
        \centering
        \includegraphics[width=\textwidth]{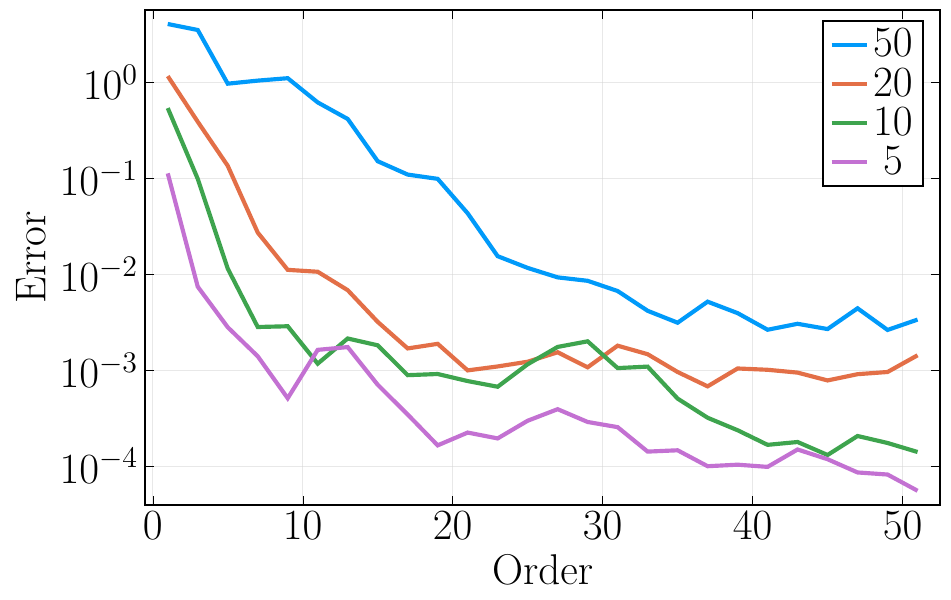}
        \caption{$\sigma=5$}
    \end{subfigure}
    \begin{subfigure}{\wid}
        \centering
        \includegraphics[width=\textwidth]{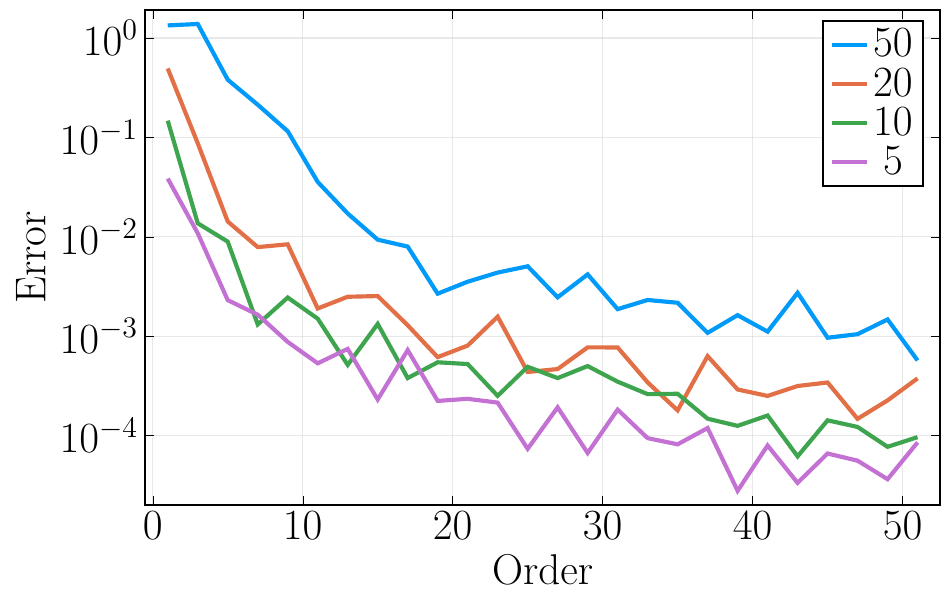}
        \caption{$\sigma=10$}
    \end{subfigure}
    \caption{Integration error for $5\times 5$, $10\times 10$, $20\times 20$, and $50\times 50$ pixels per element considering odd quadrature orders ranging from 1 to 51. From top to bottom, and left to right, we consider this setting using a Gaussian smoothing of 0, 1, 5, and 10 with the brain images. 
    }
    \label{fig:integration-errors}
\end{figure}

    We consider the ground truth integration value to be $q_\text{truth}=201$. In this setting, we computed the error for various numbers of pixels-per-element, and display the errors for different Gaussian smoothing levels and the brain images in Figure~\ref{fig:integration-errors}. 
    We use standard tensor-product Gauss-Legendre quadratures for the experiment.    
    Integration error is thus significant, and it becomes much more relevant when no smoothing is used. Considering that with \ac{amr} we have coarser elements where there are no complex feature of the images, we have found that a good compromise is considering $q=6$. Still, this will be addressed more effectively in future work.

\section{Conclusions and future perspectives}\label{section:conclusions}
In this work, we have extended the \emph{a-posteriori} error analysis of \ac{dir} with linear elastic regularisation to the case of Robin boundary conditions. We have formulated an efficient strategy for leveraging octree-based adaptivity that supports both coarsening and refinement, and have tested our strategy on realistic brain images and on the challenging OC benchmark. The \ac{dir} problem is highly nonlinear, and so we solve it with an IMEX formulation of a proximal-point algorithm, and further proposed to accelerate this algorithm with \ac{aa}. We have observed that acceleration can be very convenient computationally, as it significantly reduced the elapsed time of the brain test. Still, the methodology is highly problem dependent, as it was ineffective for the OC test when using larger values of $\alpha$. Putting everything together, we were able to significantly reduce the computational time required to solve \ac{dir}. The proposed \ac{amr} approach is able to provide better resolution on difficult domain regions and instead relax it where not required, which results in better solutions obtained with fewer \acp{dof} and in less time.

We will focus our future work on developing {black-box} nonlinear solvers for the \ac{dir} problem, so that we do not need to compute a pseudo-time step for convergence. We will provide a deeper analysis of the interplay between mesh size and solver performance, as the sensitivity of proximal-point algorithms with respect to mesh-size remains understudied, even more so in \ac{dir}.  Then, we will leverage such robust solvers to develop an automated computation of the similarity parameter $\alpha$, and extend our software to the 3D case. Our long term goal is that of developing an open source software that provides a robust and efficient solution of \ac{dir} without the requirement of intense parameter tuning.

\bibliographystyle{siam}
\bibliography{refs}

\end{document}